\definecolor{darkred}{rgb}{0.6,0.2,0.2}
\newcommand\bA{{\bf A}}
\definecolor{darkblue}{rgb}{0.2,0.2,0.6}
\newcommand\nb{\nabla}
\newcommand{\beq}{\begin{equation} \begin{split}}
\newcommand{\eeq}{\end{split} \end{equation}}
\newcommand\Omg{\Omega}
\def\section{\@startsection{section}{1}\z@{.9\linespacing\@plus\linespacing}%
	{.7\linespacing} {\fontsize{13}{14}\selectfont\bfseries\centering}}
\def\paragraph{\@startsection{paragraph}{4}%
	\z@{0.3em}{-.5em}%
	{$\bullet$ \ \normalfont\itshape}}
\renewcommand\and{\qquad\text{and}\qquad}
\newcommand\sm{\setminus}
\newcommand{\comm}[1]{}
\def\sfH{\mathsf{H}}
\def\bm1{\mathbbm{1}}
\def\G{\Gamma}
\def\s{\sigma}
\def\p{\partial}
\def\omg{\omega}
\def\Re{{\rm Re}\,}
\def\Im{{\rm Im}\,}
\def\arr{\rightarrow}
\newcommand{\sfJ}{\mathsf{J}}
\def\aa{\alpha}
\def\lm{\lambda}
\def\s{\sigma}
\def\ii{{\mathsf{i}}}
\def\p{\partial}
\def\sfH{\mathsf{H}}
\def\dd{{\,\mathrm{d}}}
\def\omg{\omega}
\newcounter{counter_a}
\newenvironment{myenum}{\begin{list}{{\rm(\roman{counter_a})}}%
{\usecounter{counter_a}
\setlength{\itemsep}{1.ex}\setlength{\topsep}{0.8ex}
\setlength{\leftmargin}{5ex}\setlength{\labelwidth}{5ex}}}{\end{list}}
\newcommand{\eg}{{\it e.g.}\,}
\newcommand{\cf}{{\it cf.}\,}
\numberwithin{figure}{section}
\numberwithin{equation}{section}
\theoremstyle{plain}
\newtheorem*{thm*}{Theorem}
\newtheorem{thm}{Theorem}[section]
\newtheorem{lem}[thm]{Lemma}
\newtheorem{prop}[thm]{Proposition}
\newtheorem{cor}[thm]{Corollary}
\newtheorem{dfn}[thm]{Definition}
\theoremstyle{remark}
\newtheorem{remark}[thm]{Remark}
\theoremstyle{plain}
\newcommand{\beu}{\begin{equation*}}
\newcommand{\eeu}{\end{equation*}}
\newcommand{\besu}{\begin{equation*}
\begin{aligned}}
\newcommand{\eesu}{\end{aligned}
\end{equation*}}
\newcommand{\bes}{\begin{equation}
\begin{aligned}}
\newcommand{\ees}{\end{aligned}
\end{equation}}
\newcommand\cB{\mathcal B}
\newcommand\cL{\mathcal L}
\newcommand\cM{\mathcal M}
\newcommand\frh{\mathfrak h}
\newcommand\eps{\varepsilon}
\newcommand\ov{\overline}
\newcommand\wh{\widehat}
\newcommand\void[1]{}
\def\ov{\overline}
\def\eps{\varepsilon}
\def\frt{{\mathfrak t}}
      \def\dC{{\mathbb C}}
   \def\dN{{\mathbb N}}   
      \def\dR{{\mathbb R}}
   \def\dZ{{\mathbb Z}}
   \def\sfH{{\mathsf H}}   
\def\sfJ{{\mathsf J}}
   \def\sfT{{\mathsf T}}
   \def\cB{{\mathcal B}}
      \def\cL{{\mathcal L}}
\def\cM{{\mathcal M}}
\newcommand{\dom}{\mathrm{dom}\,}
\definecolor{DarkGreen}{rgb}{0,0.5,0.1}
\definecolor{DarkBlue}{rgb}{0,0.1,0.5}
\newcommand{\curl}{\mathrm{curl}\,}
\newcommand\soutD{\bgroup\markoverwith
	{\textcolor{DarkGreen}{\rule[.5ex]{2pt}{1pt}}}\ULon}
\newcommand\soutP{\bgroup\markoverwith
	{\textcolor{blue}{\rule[.5ex]{2pt}{1pt}}}\ULon}
\newcommand{\Hm}[1]{\leavevmode{\marginpar{\tiny%
			$\hbox to 0mm{\hspace*{-0.5mm}$\leftarrow$\hss}%
			\vcenter{\vrule depth 0.1mm height 0.1mm width \the\marginparwidth}%
			\hbox to
			0mm{\hss$\rightarrow$\hspace*{-0.5mm}}$\\
			\relax\raggedright #1}}}
\begin{document}

\title[]{Inequalities between Dirichlet and Neumann eigenvalues of the magnetic Laplacian}

\author[V. Lotoreichik]{Vladimir Lotoreichik}
\address[V. Lotoreichik]{Department of Theoretical Physics, Nuclear Physics Institute, 	Czech Academy of Sciences, Hlavn\'{i} 130, 25068 \v Re\v z, Czech Republic}
\email{lotoreichik@ujf.cas.cz}

\subjclass{35P15, 81Q10}

\keywords{magnetic Laplacian, homogeneous magnetic field, inequalities between Dirichlet and Neumann eigenvalues, min-max principle}

\maketitle

\begin{abstract}
	We consider the magnetic Laplacian with the homogeneous magnetic field in two and three dimensions.
	We prove that 
	the $(k+1)$-th magnetic Neumann eigenvalue of a bounded convex planar domain is not larger than its $k$-th magnetic Dirichlet eigenvalue. In three dimensions, we restrict our attention to convex domains, which are invariant under rotation by an angle of $\pi$ around an axis parallel to the magnetic field. For such domains, we prove that
	the $(k+2)$-th magnetic Neumann eigenvalue is not larger than the $k$-th magnetic Dirichlet eigenvalue
	provided that this Dirichlet eigenvalue is simple.
	The proofs rely on a modification of the strategy due to Levine and Weinberger. 
\end{abstract}
\section{Introduction}
The aim of the present paper is to obtain inequalities between Dirichlet and Neumann eigenvalues of the magnetic Laplacian on a bounded  domain in the spirit of the  Levine-Weinberger and Friedlander-Filonov inequalities  for the usual (non-magnetic) Laplacian. The Levine-Weinberger inequality~\cite{LW86} states that on a $d$-dimensional ($d\ge 2$) bounded convex domain the $(k+d)$-th Neumann eigenvalue of the Laplacian does not exceed its $k$-th Dirichlet eigenvalue. Without the convexity assumption, it was proved by Friedlander~\cite{F91} that the $(k+1)$-th Neumann eigenvalue of the Laplacian does not exceed its $k$-th Dirichlet eigenvalue. This inequality was later shown by Filonov~\cite{F05} to be strict and under weaker regularity assumptions on the domain. A discussion of these results can be found in the recent monograph~\cite{LMP}. 

Our analysis is motivated by the question to what extent these eigenvalue inequalities survive the presence of the
homogeneous magnetic field. We focus on physically motivated two- and three-dimensional settings.
Inequalities between Dirichlet and Neumann eigenvalues of the magnetic Laplacian are already considered by Frank and Laptev in~\cite{FL10}. The present paper is partially inspired by their considerations and contains improvements of their results under additional assumptions. 

First, it is shown in~\cite{FL10} in two dimensions that,
below any Landau level,
the number of magnetic Neumann eigenvalues  exceeds at least by one the number of magnetic Dirichlet eigenvalues.
This result suggests that an inequality between individual eigenvalues with an index shift can hold. In the present paper, we prove 
that on any  bounded planar convex domain the $(k+1)$-th Neumann eigenvalue of the magnetic Laplacian does not exceed its $k$-th Dirichlet eigenvalue. 
It is also proved in~\cite{FL10} 
that in three dimensions the \mbox{$(k+1)$-th} magnetic Neumann eigenvalue is smaller than the $k$-th magnetic Dirichlet eigenvalue. A natural question is whether one can improve the index shift in this inequality.
We restrict our attention to convex domains, which coincide with itself upon rotation by an angle of $\pi$ around an axis parallel to the vector of the magnetic field. For such symmetric domains, we prove that the
$(k+2)$-th Neumann eigenvalue of the magnetic Laplacian does not exceed its $k$-th Dirichlet eigenvalue provided that the latter Dirichlet eigenvalue is simple.

Recently, Rohleder~\cite{R23} established, using a new variational principle, that in two dimensions the Levine-Weinberger inequality is valid for all sufficiently smooth bounded simply-connected domains (without the convexity assumption). The validity of the Levine-Weinberger inequality for general bounded domains in all space dimensions is a prominent open problem~\cite[Conjecture 3.2.41]{LMP}. In this perspective, one may ask whether the inequalities obtained in the present paper are also valid without the convexity assumption.
\subsection{Main results}
Let us describe our results in more detail. To this aim we need first to define the operators we consider. 
Let $\Omg\subset\dR^d$ with $d \in\{ 2,3\}$ be a bounded Lipschitz
domain. Note that any bounded convex domain has Lipschitz boundary (see~\cite[Corollary 1.2.2.3]{Gr}).  We choose the vector potential ${\bf A} \colon\Omg\arr\dR^d$ of the homogeneous magnetic field of intensity $b > 0$ in the Landau gauge
\[	
\begin{aligned}
	{\bf A}(x) =
	\begin{cases} 
	(0,bx_1)^{\top},& d = 2,\\
	(0,b x_1,0)^\top,&d = 3,
	\end{cases}
\end{aligned}	  
\]
where we use the convention $x = (x_1,x_2)$ for $d = 2$ and $x = (x_1,x_2,x_3)$ for $d = 3$. 
In two dimensions, the respective magnetic field is given by $\curl {\bf A} =
\p_1 A_2 - \p_2 A_1 = b$ and it points in the direction orthogonal to the plane. The particular choice of the gauge of the homogeneous magnetic field
in two dimensions is not 
restrictive for further considerations, because our results concern convex domains, for which there is gauge invariance of the spectra for the magnetic Laplacians; \cf~\cite[Appendix D]{FH}.
In three dimensions,
we get after a simple computation
that the magnetic field is given by $\curl {\bf A} = (0,0,b)^\top$ and thus points along the $x_3$-axis. 
The direction of the vector of the magnetic field in three dimensions is essential in our analysis.

Let $H^1(\Omg)$ be the standard $L^2$-based first-order Sobolev space and let $H^1_0(\Omg)$ be the closure of $C^\infty_0(\Omg)$ in $H^1(\Omg)$.
Consider the following closed, densely defined, non-negative quadratic forms
\begin{equation}\label{eq:forms}
\begin{aligned}
	\frh_{b,{\rm D}}^\Omg[u] 
	& := 
	\|(\nb - \ii {\bf A})u\|^2_{L^2(\Omg;\dC^d)},\qquad \dom\frh_{b,{\rm D}}^\Omg := H^1_0(\Omg),\\
	\frh_{b,{\rm N}}^\Omg[u]& := 
		\|(\nb - \ii {\bf A})u\|^2_{L^2(\Omg;\dC^d)},\qquad \dom\frh_{b,{\rm N}}^\Omg := H^1(\Omg),
\end{aligned}	
\end{equation}
in the Hilbert space $L^2(\Omg)$; \cf~\cite[Section 1.2]{FH}.
Let $\sfH_{b,{\rm D}}^\Omg$ and $\sfH_{b,{\rm N}}^\Omg$ be the self-adjoint operators in $L^2(\Omg)$ associated with the forms~$\frh_{b,{\rm D}}^\Omg$ and~$\frh_{b,{\rm N}}^\Omg$, respectively. Compactness of the embedding of $H^1(\Omg)$ into $L^2(\Omg)$ yields that the spectra of both $\sfH_{b,\rm D}^\Omg$ and $\sfH_{b,\rm N}^\Omg$ are purely discrete.
Denote by $\{\lm_k^b(\Omg)\}_{k\in\dN}$
and $\{\mu_k^b(\Omg)\}_{k\in\dN}$ the eigenvalues
of, respectively, $\sfH_{b,\rm D}^\Omg$ and $\sfH_{b,\rm N}^\Omg$, enumerated in the non-decreasing order and repeated with multiplicities taken into account.

It was proved in two dimensions by Frank and Laptev in~\cite[Theorem 3.1\,(1)]{FL10} that
\[
	\#\big\{k\in\dN\colon \lm_k^b(\Omg) \le b(2q+1)\big\} +1 \le \#\big\{k\in\dN\colon \mu_k^b(\Omg) < b(2q+1)\big\},\qquad \text{for all}\,\,q\in\dN_0.
\]
Namely, for any $q\in\dN_0$, if the operator $\sfH_{b,\rm D}^\Omg$ has $j\in\dN_0$ eigenvalues less or equal to $b(2q+1)$, then the operator $\sfH_{b,\rm N}^\Omg$ has at least $j+1$ eigenvalues less than $b(2q+1)$. As we have already mentioned, this result suggests that an inequality with an index shift between the individual eigenvalues of $\sfH_{b,\rm D}^\Omg$ and $\sfH_{b,\rm N}^\Omg$ may hold. This inequality under an additional convexity assumption is the first main result of the present paper.
\begin{thm}\label{thm:main1}
	Let $\Omg\subset\dR^2$ be a bounded, convex domain. Then, for any $b > 0$,
	\[
		\mu_{k+1}^b(\Omg) \le \lm_k^b(\Omg),\qquad\text{for all}\,\, k\in\dN.
	\]
\end{thm}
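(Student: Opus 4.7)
\emph{Plan.} I follow the Levine--Weinberger strategy, enlarging the span of the first $k$ magnetic Dirichlet eigenfunctions by one extra test function built from the magnetic ``lowering operator''. Let $u_1,\dots,u_k \in H^1_0(\Omg)$ be an orthonormal family of eigenfunctions of $\sfH_{b,{\rm D}}^\Omg$ corresponding to $\lm_1^b(\Omg) \le \dots \le \lm_k^b(\Omg)$; set $D_j := \p_j - \ii A_j$ and $\mathcal{Q} := D_1 + \ii D_2$. The commutator identity $[D_1, D_2] = -\ii b$ in Landau gauge gives $[(-\ii\nb - \bA)^2, \mathcal{Q}] = -2b\mathcal{Q}$, so that $\phi := \mathcal{Q}u_k$ satisfies the pointwise equation $(-\ii\nb - \bA)^2 \phi = (\lm_k^b(\Omg) - 2b)\phi$ in $\Omg$. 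Integration by parts using $u_k \in H^1_0(\Omg)$ yields $\|\phi\|^2 = (\lm_k^b(\Omg) - b)\|u_k\|^2 > 0$ (the positivity uses $\lm_1^b(\Omg) > b$, since the first Landau level $b$ is a lower bound for the spectrum on $\dR^2$). Because $\phi|_{\p\Omg} = (\nu_1 + \ii\nu_2)\p_\nu u_k$ does not vanish identically while each $u_j$ does, $V := \mathrm{span}(u_1,\dots,u_k,\phi)$ has dimension $k+1$.

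To apply the min-max principle to $V$, it suffices to show $\frh_{b,{\rm N}}^\Omg[f] \le \lm_k^b(\Omg)\|f\|^2$ for every $f \in V$. Integration by parts combined with the pointwise equation for $\phi$ gives the cross-term identity $\frh_{b,{\rm N}}^\Omg(u_j, \phi) = (\lm_k^b(\Omg) - 2b)\langle u_j, \phi\rangle$ for $j = 1,\dots,k$. A decisive orthogonality is $\langle u_j, \phi\rangle = 0$ whenever $\lm_j^b(\Omg) = \lm_k^b(\Omg)$: the identity $\langle u, \mathcal{Q}u\rangle = 0$ for a single Dirichlet eigenfunction $u$ follows from conservation of the magnetic probability current $J := \mathrm{Im}(\bar u D u)$ (divergence-free in $\Omg$ by the eigenvalue equation and vanishing on $\p\Omg$ by the Dirichlet condition, hence $\int_\Omg J_i\,\D x = 0$ by pairing $\nb(x_i) \cdot J$ with the divergence theorem); a short calculation shows these vanishing integrals encode exactly the real and imaginary parts of $\langle u, \mathcal{Q}u\rangle$, and polarization inside a Dirichlet eigenspace extends the identity to all cross-pairings.

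With the cross-term formula and this orthogonality in hand, completing the square in the coefficient of $\phi$ reduces the target inequality $\frh_{b,{\rm N}}^\Omg[f] \le \lm_k^b(\Omg)\|f\|^2$ on $V$ to the single estimate
\[
    \frh_{b,{\rm N}}^\Omg[\phi] - \lm_k^b(\Omg)\|\phi\|^2 + 4b^2 \sum_{\lm_j^b(\Omg) < \lm_k^b(\Omg)} \frac{|\langle u_j, \phi\rangle|^2}{\lm_k^b(\Omg) - \lm_j^b(\Omg)} \le 0.
\]
A further integration by parts rewrites the left-hand side as $-2b\|\phi\|^2$ plus a boundary integral over $\p\Omg$. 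Restricting the eigenvalue equation for $u_k$ to $\p\Omg$ (using $u_k|_{\p\Omg} = 0$) yields $\p_\nu^2 u_k = (2\ii b x_1 \nu_2 - \kappa)\p_\nu u_k$, where $\kappa$ is the non-negative curvature of $\p\Omg$; plugging this in produces the curvature contribution $-\int_{\p\Omg} \kappa|\p_\nu u_k|^2\,\D S \le 0$, the magnetic analogue of the classical Levine--Weinberger boundary identity.

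The main obstacle will be the careful bookkeeping of the remaining boundary pieces: the $x_1$-weighted integrals inherited from the gauge potential $\bA = (0, bx_1)^\top$ and the tangential phase derivative of $\p_\nu u_k$. These must be shown to be precisely absorbed by the Schur sum $4b^2 \sum |\langle u_j, \phi\rangle|^2 / (\lm_k^b(\Omg) - \lm_j^b(\Omg))$ on the right. I expect this balance to emerge from boundary integration by parts together with the dual representation $\langle u_j, \mathcal{Q}u_k\rangle = -\langle (D_1 - \ii D_2) u_j, u_k\rangle$ of the cross-pairings, which converts the non-curvature boundary contributions into interior quantities matching the Schur sum. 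Matching these contributions is the technical heart of the argument and the place where the two-dimensional ladder structure of the magnetic Laplacian plays its essential role.
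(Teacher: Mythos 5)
Your setup is internally consistent up to a point: the intertwining relation $[H,\mathcal Q]=-2b\mathcal Q$ with $\mathcal Q=D_1+iD_2$ is correct in the Landau gauge, the identity $\|\mathcal Q u_k\|^2=(\lambda_k^b(\Omega)-b)\|u_k\|^2$ follows from $\mathcal Q^\dagger\mathcal Q=H-b$ and the Dirichlet condition, and the vanishing of $\langle u_j,\mathcal Qu_k\rangle$ inside the top eigenspace via the magnetic current plus polarization is a valid observation. The genuine gap is the inequality you isolate at the end,
\[
\mathfrak h_{b,\mathrm N}^{\Omega}[\phi]-\lambda_k^b(\Omega)\|\phi\|^2
+4b^2\!\!\sum_{\lambda_j^b(\Omega)<\lambda_k^b(\Omega)}\frac{|\langle u_j,\phi\rangle|^2}{\lambda_k^b(\Omega)-\lambda_j^b(\Omega)}\le 0 ,
\]
which carries the entire weight of the proof and which you do not establish: you state that the non-curvature boundary pieces ``must be shown to be precisely absorbed'' by the Schur sum and that you ``expect this balance to emerge.'' This is not a bookkeeping issue that can be deferred. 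Because $\mathcal Q$ does \emph{not} commute with $H$, the cross terms $\mathfrak h_{b,\mathrm N}^{\Omega}(u_j,\phi)=(\lambda_k^b(\Omega)-2b)\langle u_j,\phi\rangle$ differ from $\lambda_k^b(\Omega)\langle u_j,\phi\rangle$ by $-2b\langle u_j,\phi\rangle$, and the resulting Schur sum has denominators $\lambda_k^b(\Omega)-\lambda_j^b(\Omega)$ with no a priori lower bound, while the only manifestly negative resources available are $-2b\|\phi\|^2=-2b(\lambda_k^b(\Omega)-b)$ and the curvature term; the coefficients $\langle u_j,\phi\rangle$ for $\lambda_j^b(\Omega)<\lambda_k^b(\Omega)$ are given by boundary integrals of $\overline{\mathcal Qu_k}\,\partial_\nu u_j$ and do not vanish or degenerate as the spectral gap shrinks. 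No mechanism for the claimed cancellation is exhibited, and this is precisely the obstruction that confines the ladder-operator method of Frank--Laptev to counting inequalities relative to Landau levels rather than inequalities between individual eigenvalues. (Two further, smaller gaps: your curvature identity presupposes a $C^2$ boundary, and you never address passing to general bounded convex domains.)

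The paper's proof avoids this entirely by a different choice of the extra trial function: it takes $\partial_2 v$, the derivative in the direction on which the gauge potential does not depend, so that $\partial_2$ \emph{commutes} with $H$. Then $H\partial_2v=\lambda_k^b(\Omega)\partial_2v$ distributionally, every cross term with $\mathcal L$ comes out exactly as $\lambda_k^b(\Omega)\langle u,\partial_2v\rangle$, and no completion of squares or Schur sum is needed; the only substantive step is the single identity $\|(\nabla-i\bA)\partial_2v\|^2=\lambda_k^b(\Omega)\|\partial_2v\\|^2$, proved on convex polygons via the Grisvard/Levine--Weinberger integration-by-parts identity $\int_\Omega|\partial_{12}v_j|^2=\int_\Omega\partial_1^2v_j\,\partial_2^2v_j$ applied to the real and imaginary parts, followed by polygonal approximation and lower semicontinuity of Neumann eigenvalues. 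If you wish to pursue your route, you would need to either prove the displayed inequality directly (which I doubt is possible in general) or switch to the commuting direction $\partial_2$ as the paper does.
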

In the same paper, Frank and Laptev
established in~\cite[Theorem 3.1\,(2)]{FL10} that, in three dimensions the inequality $\mu_{k+1}^b(\Omg)< \lm_k^b(\Omg)$ holds for any $b > 0$ and all $k\in\dN$.
In our second main result, we prove a (non-strict) eigenvalue inequality with a higher index shift under the convexity assumption in the class of domains invariant under rotation by angle of $\pi$ around the $x_3$-axis (the axis along which the magnetic field is oriented). In order to formulate this result we introduce the mapping
\begin{equation}\label{eq:J}
	\sfJ \colon\dR^3\arr\dR^3,\qquad \sfJ x = (-x_1,-x_2,x_3)^\top.
\end{equation}	

\begin{thm}\label{thm:main2}
	Let $\Omg\subset\dR^3$ be a bounded, convex domain such that $\sfJ(\Omg) = \Omg$. Let $b  > 0$ be fixed. Let $k\in\dN$ be such that $\lm_k^b(\Omg)$ is a simple eigenvalue of $\sfH_{b,\rm D}^\Omg$. Then it holds that
	\[
	\mu_{k+2}^b(\Omg) \le \lm_k^b(\Omg).
	\]
\end{thm}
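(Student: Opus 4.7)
The plan is to adapt the Levine--Weinberger strategy to the magnetic setting: we construct a $(k+2)$-dimensional subspace $V\subset H^1(\Omg)$ on which the Rayleigh quotient of $\sfH_{b,\rm N}^\Omg$ is bounded above by $\lm:=\lm_k^b(\Omg)$, and then invoke the min--max principle to conclude $\mu_{k+2}^b(\Omg)\le\lm$. Denoting the magnetic momenta by $D_j:=-\ii\partial_j - A_j$ ($j=1,2,3$) and the ladder operator by $D_+:=D_1+\ii D_2$, the commutation relations $[D_1,D_2]=\ii b$ and $[D_j,D_3]=0$ yield $[\sfH_b,D_3]=0$ and $[\sfH_b,D_+]=-2b\,D_+$. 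Hence, for a normalized Dirichlet eigenfunction $u$ at $\lm$, one has the pointwise identities
\[
	\sfH_b(D_3 u) = \lm\, D_3 u, \qquad \sfH_b(D_+ u) = (\lm - 2b)\, D_+ u
\]
inside $\Omg$. Together with an $L^2$-orthonormal system $u_1,\dots,u_k$ of Dirichlet eigenfunctions (with $u_k=u$), the two extra dimensions of $V$ beyond $\operatorname{span}\{u_1,\dots,u_k\}$ will be furnished by $D_3 u$ and $D_+ u$; the ``magnetic bonus'' $-2b$ in the second identity will absorb an unfavourable boundary contribution later on.

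The $\sfJ$-symmetry is used in two ways. The unitary involution $(\mathcal{J}\psi)(x):=\psi(\sfJ x)$ anticommutes with $D_1,D_2$ and commutes with $D_3$ in the chosen Landau gauge, hence commutes with $\sfH_b$; consequently both $\sfH_{b,\rm D}^\Omg$ and $\sfH_{b,\rm N}^\Omg$ preserve the splitting $L^2(\Omg)=L^2_+(\Omg)\oplus L^2_-(\Omg)$ into $\mathcal{J}$-even and $\mathcal{J}$-odd sectors. Simplicity of $\lm$ forces $u$ to have a definite parity, and we assume $\mathcal{J} u = u$ without loss of generality. Then $D_3 u$ is $\mathcal{J}$-even whereas $D_+ u$ is $\mathcal{J}$-odd, so $\langle D_3 u, D_+ u\rangle_{L^2}=0$; moreover the $u_j$ can be chosen of definite $\mathcal{J}$-parity, so that $D_3 u$ (respectively $D_+ u$) is $L^2$-orthogonal to every $u_j$ of the opposite parity. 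Setting $V:=\operatorname{span}\{u_1,\dots,u_k, D_+ u, D_3 u\}$, one verifies $\dim V = k+2$ from the distinct pointwise eigenvalues ($\lm\ne\lm-2b$), the parity argument, and a standard exclusion of the degenerate cases $D_+ u\equiv 0$ and $D_3 u\in\operatorname{span}\{u_1,\dots,u_k\}$.

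For $v=\sum_{j=1}^k c_j u_j + \alpha D_+ u + \beta D_3 u \in V$, Green's identity for the magnetic Laplacian combined with the eigenvalue identities yields the decomposition
\[
	\frh_{b,\rm N}^\Omg[v] - \lm\|v\|^2 = \sum_{j=1}^k |c_j|^2(\lm_j^b - \lm)\|u_j\|^2 \;-\; 2b|\alpha|^2\|D_+ u\|^2 \;+\; B(v,v),
\]
where $B(v,v)$ is a Hermitian boundary form whose entries are integrals over $\partial\Omg$ of quadratic expressions in the normal derivatives $\partial_\nu u_p$ (obtained from $\nabla u_p|_{\partial\Omg}=(\partial_\nu u_p)\nu$) and the boundary traces of $D_+u, D_3u$, which involve the vector potential $\bA$. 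The main obstacle is to show $B(v,v)\le 0$. Here the $\mathcal{J}$-parity decouples $B$ into an even sub-block (involving $\beta$ and the $\mathcal{J}$-even $c_j$'s) and an odd sub-block (involving $\alpha$ and the $\mathcal{J}$-odd $c_j$'s); within each sub-block, the Weingarten identity and the second fundamental form $\mathrm{II}$ of $\partial\Omg$ rearrange the boundary terms into Levine--Weinberger-type combinations weighted by $\mathrm{II}\ge 0$, whose sign is supplied by convexity of $\Omg$. Any residual positive contribution in the odd sub-block arising from the $\bA$-dependent part of the magnetic conormal derivative is dominated by the negative bulk shift $-2b|\alpha|^2\|D_+u\|^2$. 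Once $B(v,v)\le 0$ is established, $\frh_{b,\rm N}^\Omg[v]\le\lm\|v\|^2$ on $V$ and the min--max principle yields $\mu_{k+2}^b(\Omg)\le\lm_k^b(\Omg)$.
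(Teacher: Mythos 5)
Your route is genuinely different from the paper's: the paper takes $\p_2 v$ and $\p_3 v$ as the two extra trial directions (deliberately excluding anything involving $\p_1$, since $\bA$ depends on $x_1$), whereas you take $D_3u$ together with the ladder function $D_+u$. The advantage of $\p_2v,\p_3v$ is that both are generalized eigenfunctions at the \emph{same} energy $\lm$, so every cross term with the span $\cL$ of the first $k$ Dirichlet eigenfunctions equals exactly $\lm(u_j,\p_iv)$ and cancels against $\lm\|w\|^2$, while the diagonal terms satisfy the exact identity $\|(\nb-\ii\bA)\p_iv\|^2=\lm\|\p_iv\|^2$ via Grisvard's integration-by-parts identity on convex polyhedra; the only delicate piece is the boundary term in the off-diagonal $\frh_{b,\rm N}^\Omg(\p_2v,\p_3v)$, which the $\sfJ$-symmetry kills. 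Your parity argument disposes of the off-diagonal term just as well, but the shifted energy of $D_+u$ creates two problems your decomposition does not account for.

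First, since $-(\nb-\ii\bA)^2D_+u=(\lm-2b)D_+u$, integration by parts against $u_j\in H^1_0(\Omg)$ gives $\frh_{b,\rm N}^\Omg(u_j,D_+u)=(\lm-2b)(u_j,D_+u)$ with no boundary contribution, so $\frh_{b,\rm N}^\Omg[v]-\lm\|v\|^2$ contains the \emph{interior} terms $-4b\,\Re\{c_j\bar\alpha\,(u_j,D_+u)\}$, which are absent from your claimed identity (you list only the two diagonal sums and a boundary form $B$). Parity kills these only for $u_j$ of the same parity as $u$; for odd-parity $u_j$ the quantity $(u_j,D_+u)=(D_-u_j,u)$ has no reason to vanish, the term is sign-indefinite, and absorbing it into $(\lm_j-\lm)|c_j|^2-2b|\alpha|^2\|D_+u\|^2$ by Cauchy--Schwarz requires roughly $\lm-\lm_j\ge 2b$, which fails in general. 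Second, even the diagonal claim for $D_+u$ needs the boundary term in $\frh_{b,\rm N}^\Omg[D_+u]=(\lm-2b)\|D_+u\|^2+(\mathrm{bdry})$ to satisfy $(\mathrm{bdry})\le 2b\|D_+u\|^2$; you assert this domination but give no argument. The $\bA$-dependent part of the conormal derivative produces boundary integrals of the type $b\int_{\p\Omg}x_1(\cdots)|\p_\nu u|^2\,\dd\s$, which are controlled neither by convexity (they are not curvature-weighted) nor by the interior quantity $b\|D_+u\|^2_{L^2(\Omg)}$ --- there is no trace inequality in that direction. This is precisely the obstruction that forces the paper to avoid the $x_1$-derivative, and is why the analogous $k+2$ shift in two dimensions is left open there; a soft ladder-operator argument that circumvented it would be a surprise. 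A smaller omission: you work with the second fundamental form of $\p\Omg$, but a general convex domain is only Lipschitz and $v$ only $H^2$; the paper proves the polyhedral case first (where Grisvard-type identities replace curvature) and then approximates, using lower semicontinuity of the magnetic Neumann eigenvalues.
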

Theorem~\ref{thm:main2} applies to all domains rotationally invariant around the $x_3$-axis including the ball or properly oriented ellipsoid with two semi-axes of equal length. It also applies to some symmetric polyhedral domains. For example, it can be applied to the octahedron oriented so that one of its diagonals lies on the $x_3$-axis.  

The Levine-Weinberger inequality is sharp in the sense that, in general, the index $k+d$ can not be replaced by $k+d+1$. Indeed, as is well-known, for the disk the fourth Neumann eigenvalue of the Laplacian is larger than its first Dirichlet eigenvalue; \cf~\cite[Example~4.6]{R23}. We provide a numerical evidence in Section~\ref{sec:disk} below, that in the magnetic case the third Neumann eigenvalue of the disk is not larger than its first Dirichlet eigenvalue. Thus, an example showing sharpness of Theorem~\ref{thm:main1} (if it exists) should either be not the disk or should be based on comparison of higher eigenvalues for the disk.
In view of this observation, it remains an open question whether the index $k+1$ in Theorem~\ref{thm:main1} can be replaced by $k+2$. The assumption of simplicity of the $k$-th Dirichlet eigenvalue in Theorem~\ref{thm:main2} is technical and we expect that it can be removed. However, the symmetry of the domain is essential for the argument and it is not {\it a priori} clear whether the inequality in Theorem~\ref{thm:main2} holds for general convex domains. 

Both theorems are first proved for convex polyhedral domains.
The results for general convex domains are deduced via approximation. 
The eigenvalue inequalities for polyhedral domains are proved via the min-max principle, where
as the trial subspace we select the span of the first $k$ magnetic Dirichlet eigenfunctions and of all but one first-order partial derivatives of the $k$-th Dirichlet eigenfunction.
In the original proof~\cite{LW86} of the Levine-Weinberger inequality for the Laplacian it was possible to use all first-order partial derivatives of the $k$-th Dirichlet eigenfunction in the construction of the trial subspace.
In the magnetic case, we need to exclude the partial derivative with respect to the variable $x_1$, on which  the vector potential $\bA$ depends. Similar idea was used in~\cite{LR17} for the Laplacian with mixed boundary conditions and 
in~\cite{R21} for the Schr\"odinger operator on a  bounded domain with (electric) potential dependent on the number of directions strictly less than the space dimension. The main novel difficulties of the magnetic problem is that the perturbation is non-additive and that the eigenfunctions are genuinely complex-valued. The fact that the eigenfunctions of the magnetic Dirichlet Laplacian are complex-valued leads to additional difficulties. It is the main reason for the symmetry assumption imposed in the formulation of Theorem~\ref{thm:main2}.  

\subsection{Overview of the literature}
The results of Levine, Weinberger, Friedlander, Filonov, and Rohleder mentioned in the beginning of the introduction were preceded by eigenvalue inequalities in more special settings or under additional assumptions.
P\'{o}lya proved
in~\cite{P52} that the second Neumann eigenvalue
of the Laplacian on a bounded smooth planar domain 
is smaller than its first Dirichlet eigenvalue. For convex, two-dimensional domains with $C^2$-boundary Payne proved in~\cite{P55} that the $(k+2)$-th Neumann eigenvalue of the Laplacian is smaller than its $k$-th Dirichlet eigenvalue, the inequality which was later generalized by Levine and Weinberger to higher dimensions. The Friedlander-Filonov inequality was conjectured by Payne in~\cite{P91} and preceded by the paper of Aviles~\cite{A86}, in which this inequality was proved under an additional assumption on the mean curvature of the boundary.
  
Similar type inequalities are also obtained for other types of differential operators.  Besides the already mentioned papers:  ~\cite{FL10} on the sub-Laplacian on the Heisenberg group and the magnetic Laplacian,~\cite{LR17} on the Laplacian with mixed boundary conditions,~\cite{R21} on the Schr\"odinger operator, we also refer in this context to recent results~\cite{DT22} on the Stokes operator
and to \cite{L24, Pr19} on the biharmonic and more general polyharmonic operators. The literature on this subject is quite extensive and it is difficult to mention all related results.

Recently, geometric bounds~\cite{CLPS23, EKP16, KL23, LS15} and isoperimetric inequalities~\cite{CLPS24, FFGKS23} for the eigenvalues of the magnetic Neumann Laplacian attracted considerable attention. 
The eigenvalues inequalities obtained in the present paper can also be viewed as estimates of magnetic Neumann eigenvalues but
in terms of magnetic Dirichlet eigenvalues.
\subsection*{Structure of the paper}
In Section~\ref{sec:pre}, we collect properties of the magnetic Laplacian on a bounded domain and recall a formula for integration by parts on polyhedral domains. Theorem~\ref{thm:main1} is proved in Section~\ref{sec:proof1}.
In turn, a proof of Theorem~\ref{thm:main2} is provided in Section~\ref{sec:proof2}. Section~\ref{sec:disk} contains
a numerical analysis of the case of the disk, which is motivated
by the intention to check sharpness of the index shift in Theorem~\ref{thm:main1}. The paper is complemented by Appendix~\ref{app}, in which
we show lower semicontinuity of magnetic Neumann eigenvalues on varying convex domains satisfying an inclusion property. This auxiliary statement is used in extending our results from polyhedral domains to general convex domains.  
\section{Preliminaries}\label{sec:pre}
\subsection{Magnetic Laplacian on a bounded domain}
Let $\Omg\subset\dR^d$, $d\in\{2,3\}$, be a bounded Lipschitz domain. We denote by $\p\Omg$ the boundary of $\Omg$ and by $\nu$ the outer unit normal vector to $\p\Omg$. This normal vector is well defined almost everywhere on $\p\Omg$ thanks to Rademacher's theorem.
By the magnetic Laplacian we understand the following
second-order differential expression
\[
	-(\nb -\ii\bA)^2 
	= -\Delta  + 2\ii b x_1\p_2  + b^2 x_1^2.
\]
It is a special case of the most general second-order  differential expression considered in~\cite[Chapter 4]{McL} (see also~\cite{BR12, BR20}).  

The magnetic Dirichlet Laplacian $\sfH_{b,\rm D}^\Omg$ associated with the quadratic form~$\frh_{b,\rm D}^\Omg$ in~\eqref{eq:forms} can be characterised as (see \cite[Section 1.2]{FH})
\begin{equation}\label{key}
	\sfH_{b,\rm D}^\Omg u = -(\nb - \ii\bA)^2 u,\qquad \dom\sfH_{b,\rm D}^\Omg = \big\{u\in H^1_0(\Omg)\colon \Delta u \in L^2(\Omg)\big\}.
\end{equation}
It follows directly from~\cite[Theorem 3.2.1.2]{Gr} that
\begin{equation}\label{eq:convex}
	\Omg\,\, \text{is convex}\qquad \Longrightarrow\qquad \dom\sfH_{b,{\rm D}}^\Omg = H^2(\Omg)\cap H^1_0(\Omg).
\end{equation}
Recall that for any $u\in H^1(\Omg)$ its trace on the boundary $u|_{\p\Omg}$ is well defined as a function in $H^{1/2}(\p\Omg)$ and for $u\in  H^1(\Omg)$ with
$\Delta u\in L^2(\Omg)$ its trace of the normal derivative $\p_\nu u|_{\p\Omg}$ is well defined as a distribution in $H^{-1/2}(\p\Omg)$; see~\cite{BGM22} for details.

The magnetic Neumann Laplacian $\sfH_{b,\rm N}^\Omg$ associated with the quadratic form~$\frh_{b,\rm N}^\Omg$ in~\eqref{eq:forms} can be characterised as (see \cite[Section 1.2]{FH} and~\cite[Section 5]{BR20})
\begin{equation}\label{key}
\begin{aligned}	
	\sfH_{b,\rm N}^\Omg u &= -(\nb - \ii\bA)^2 u,\\ \dom\sfH_{b,\rm N}^\Omg &= \big\{u\in H^1(\Omg)\colon
	\Delta u\in L^2(\Omg),\,
	\nu\cdot(\nb -\ii\bA)u = 0~~\text{on}~\p\Omg\big\},
\end{aligned}	
\end{equation}
where $\nu\cdot(\nb -\ii\bA)u$ on the boundary $\p\Omg$
should be identified with $\p_\nu u|_{\p\Omg} - \ii \nu\cdot\bA|_{\p\Omg} u|_{\p\Omg}$ being, by the above discussion, well defined
as a distribution
in $H^{-1/2}(\p\Omg)$.
Recall also that for any $u\in H^1(\Omg)$ and any $v\in H^1(\Omg)$ with $\Delta v \in L^2(\Omg)$ we have by~\cite[Eq. (5.112)]{BGM22} the following formula for integration by parts
\begin{equation}\label{eq:integrationbyparts}
	((\nb -\ii \bA)u, (\nb -\ii \bA) v)_{L^2(\Omg;\dC^d)}
	\!=\!
	(u,-(\nb -\ii \bA)^2 v)_{L^2(\Omg)}
	\!+\!\langle u|_{\p\Omg},\p_\nu v|_{\p\Omg} - \ii\nu\cdot\bA|_{\p\Omg}v|_{\p\Omg} \rangle_{\frac12,-\frac12},
\end{equation}
where $\langle\cdot,\cdot\rangle_{1/2,-1/2}$ stands for the duality product between $H^{1/2}(\p\Omg)$ and $H^{-1/2}(\p\Omg)$, which is compatible with the inner product in $L^2(\p\Omg)$ (linear in the first entry).

The eigenvalues of the magnetic Laplacians 
$\sfH_{b,\rm D}^\Omg$ and $\sfH_{b,\rm N}^\Omg$
can be characterised by the min-max principle (see, \eg,~\cite[Theorem 1.28]{FLW23})
\begin{subequations}\label{eq:minmax}
\begin{equation}\label{eq:minmaxD}
	\lm_k^b(\Omg)  = \min_{\begin{smallmatrix} \cL\subset H^1_0(\Omg)\\ \dim\cL = k\end{smallmatrix}}\max_{u\in\cL\sm \{0\}}\frac{\|(\nb - \ii \bA)u\|^2_{L^2(\Omg;\dC^d)}}{\|u\|^2_{L^2(\Omg)}},\qquad k\in\dN,
\end{equation}
\begin{equation}\label{eq:minmaxN}
	\mu_k^b(\Omg)  = \min_{\begin{smallmatrix} \cL\subset H^1(\Omg)\\ \dim\cL = k\end{smallmatrix}}\max_{u\in\cL\sm \{0\}}\frac{\|(\nb - \ii \bA)u\|^2_{L^2(\Omg;\dC^d)}}{\|u\|^2_{L^2(\Omg)}},\qquad k\in\dN,
\end{equation}
\end{subequations}
where the minima are taken with respect to $k$-dimensional linear subspaces $\cL$ of respective Sobolev spaces and the maxima are taken with respect to all non-zero functions in the subspace  $\cL$. The minima are attained on the spans of $k$ orthonormal eigenfunctions of  respective operators corresponding to their first $k$ eigenvalues while the maximum is subsequently attained on the eigenfunctions corresponding to the $k$-th eigenvalue.
As a direct consequence of the above variational characterisation we get that for any $k\in\dN$ there exists a subspace $\cL\subset H^1_0(\Omg)$ with $\dim\cL = k$ such that
\begin{equation}\label{eq:Lk}
	\frh_{b,{\rm D}}^\Omg[u] \le \lm_k^b(\Omg)\|u\|^2_{L^2(\Omg)},\qquad\text{for all}\,\,u\in\cL.
\end{equation}
The next lemma is a special case of~\cite[Proposition 2.5]{BR12}.
\begin{lem}\label{lem:BR}
	Let $\Omg\subset\dR^d$, $d\in\{2,3\}$, be a bounded connected Lipschitz domain.
	Let an open subset $\omg\subset\p\Omg$ be non-empty. 
	Let $u\in H^2(\Omg)\cap H^1_0(\Omg)$ be such that $-(\nb -\ii\bA)^2u = \lm u$ with some $\lm\in\dR$
	and that $\p_\nu u|_{\omg} = 0$. Then there holds $u\equiv0$.  
\end{lem}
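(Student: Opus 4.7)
The plan is to reduce the statement to the classical interior unique continuation principle for second-order elliptic operators with bounded lower-order coefficients, by extending $u$ by zero across a piece of the boundary where both its Dirichlet and Neumann traces vanish.

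First I would pick a point $x_0 \in \omg$ and, since $\p\Omg$ is Lipschitz and $\omg$ is relatively open in $\p\Omg$, choose a small open ball $B$ centered at $x_0$ with $B \cap \p\Omg \subset \omg$ and such that $B \setminus \overline{\Omg}$ is non-empty (the latter holds at every boundary point of a Lipschitz domain). Setting $\widetilde\Omg := \Omg \cup B$, I obtain a connected open set, because $\Omg$ is connected and $B \cap \Omg$ is non-empty. I would then extend $u$ to a function $\widetilde u$ on $\widetilde\Omg$ by declaring $\widetilde u := 0$ on $B \setminus \overline{\Omg}$.

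The main technical step is to verify that $\widetilde u \in H^2(\widetilde\Omg)$. Since $u \in H^2(\Omg)$ has vanishing Dirichlet trace on all of $\p\Omg$, all tangential first-order derivatives of $u$ on $\p\Omg$ vanish automatically; combined with the hypothesis $\p_\nu u|_\omg = 0$, this means that both $u$ and its full gradient have vanishing trace on $B \cap \p\Omg$. A standard local argument, flattening the Lipschitz boundary and approximating by smooth functions, then shows that the extension by zero across $B \cap \p\Omg$ preserves $H^2$ regularity. Once this is in place, because the magnetic potential $\bA$ is polynomial on $\dR^d$, one readily checks that $-(\nb - \ii \bA)^2 \widetilde u = \lm \widetilde u$ pointwise almost everywhere in $\widetilde\Omg$; written out as $-\Delta + 2\ii b x_1 \p_2 + b^2 x_1^2 - \lm$, this is a second-order elliptic equation with bounded coefficients on the bounded set $\widetilde\Omg$.

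Since $\widetilde u$ vanishes on the non-empty open subset $B \setminus \overline{\Omg}$ of the connected domain $\widetilde\Omg$, the classical interior unique continuation principle (for example Aronszajn's theorem, or the strong unique continuation results of Jerison--Kenig) forces $\widetilde u \equiv 0$ throughout $\widetilde\Omg$, yielding $u \equiv 0$ on $\Omg$. The hard part I expect is the $H^2$ regularity of the extension across the Lipschitz portion of the boundary; the unique continuation step itself is standard once this regularity is in hand.
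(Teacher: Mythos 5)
The paper itself does not prove this lemma; it invokes it directly as a special case of \cite[Proposition 2.5]{BR12}, so there is no in-text argument to compare against. Your strategy — extend by zero across a piece of $\omg$ using the vanishing Cauchy data, then apply interior unique continuation (Aronszajn) to the equation $-\Delta \widetilde u + 2\ii b x_1 \p_2 \widetilde u + (b^2 x_1^2 - \lm)\widetilde u = 0$, which has bounded lower-order coefficients on the bounded set $\widetilde\Omg$ — is the standard route and almost certainly the one used in the cited reference, so in substance you are reconstructing the right proof.

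One technical point deserves a correction. You propose to prove $\widetilde u \in H^2(\widetilde\Omg)$ by ``flattening the Lipschitz boundary and approximating by smooth functions.'' A bi-Lipschitz change of variables preserves $H^1$ but in general destroys $H^2$ (the Jacobian is only $L^\infty$), so literal flattening does not work at this regularity. The cleaner argument is distributional and needs no coordinate change: since $u\in H^2(\Omg)\cap H^1_0(\Omg)$ on a Lipschitz domain, $u|_{\p\Omg}\in H^1(\p\Omg)$ and $\nabla u|_{\p\Omg}\in L^2(\p\Omg)$; the vanishing of $u|_{\p\Omg}$ forces the tangential part of $\nabla u|_{\p\Omg}$ to vanish, and $\p_\nu u|_\omg=0$ then gives $\nabla u|_\omg=0$. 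Testing against $\varphi\in C_0^\infty(B)$ and integrating by parts shows that the distributional gradient of $\widetilde u$ in $B$ equals the extension by zero of $\nabla u$ (because $u|_{B\cap\p\Omg}=0$ kills the boundary term), and then the same computation applied to each $\p_i u$ (using $\nabla u|_{B\cap\p\Omg}=0$) shows that these extended first derivatives are themselves in $H^1(B)$. Hence $\widetilde u\in H^2$ near $B\cap\p\Omg$, and the rest of your argument goes through as written.
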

\begin{remark}
Under the assumption $u\in H^2(\Omg)\cap H^1_0(\Omg)$ the Neumann trace $\p_\nu u|_{\p\Omg}$,
is more regular and belongs to $L^2(\p\Omg)$ (see~\cite[Eq. (1.18)]{BGM22}). Thus, its restriction $\p_\nu u|_{\omg}$ to $\omg\subset\p\Omg$ is naturally defined.
\end{remark}
\subsection{Integration by parts in convex polygons and polyhedra}
In this subsection we recall a formula for integration by parts on polyhedral domains, which is derived in~\cite[Appendix A]{LR17} in all space dimensions. Its two-dimensional version  
can be found in the classical monograph by Grisvard (see~\cite[Lemma 4.3.1.2]{Gr}). Before we proceed, we recall the definition of polyhedral domains in $\dR^d$ with $d=2,3$. The definition can be extended to dimensions $d > 3$, but it is not needed for our purposes.
\begin{dfn}
Let  $\Omg\subset\dR^d$, $d\in\{2,3\}$, be a bounded, connected Lipschitz domain.
\begin{myenum}
\item For $d = 2$, $\Omega$ is a polyhedral (or polygonal) domain if its boundary $\p\Omg$ is the union of finitely many line segments.
\item For $d = 3$, $\Omega$ is a polyhedral domain if for each plane $H \subset\dR^3$ the intersection $H \cap\Omg$  is either a polygon in $\dR^2$
(where we identify $H$ with $\dR^2$) or empty.
\end{myenum}
\end{dfn}
We remark that any polyhedral domain is Lipschitz.
Now we are ready to state a formula for integration by parts on convex polyhedral domains. In the below proposition and in the following, we will occasionally use the abbreviation for the partial derivatives $\p_{ij}u = \frac{\p^2u}{\p x_i\p x_j}$ for $1\le i,j\le d$. We will also use the abbreviation $\p_i^2 u = \frac{\p^2 u}{\p x_i^2}$ for $1\le i\le d$.
\begin{prop}[{\cite[Lemma A.1]{LR17}}]
	\label{prop:byparts}
	Let $\Omg\subset\dR^d$, $d \in\{2,3\}$, be a bounded convex polyhedral domain and let $u\in H^2(\Omg)\cap H^1_0(\Omg)$ be real-valued. Then 
	\[
		\int_\Omg (\p_{km} u)(\p_{kj} u)\dd x = 
		\int_\Omg (\p_{mj} u)(\p_{kk} u)\dd x
	\]
	for all $j,k,m\in\{1,2,3\}$ if $d = 3$ or for all $j,k,m\in\{1,2\}$ if $d = 2$.  
\end{prop}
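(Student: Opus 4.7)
The plan is to reduce the identity to a vanishing boundary integral via a product-rule manipulation and then exploit the Dirichlet condition on each flat face of the polyhedron to make that integral vanish. By density I would first establish the identity for $u\in C^\infty(\overline{\Omg})$ with $u|_{\p\Omg}=0$ and then extend it to the full space $H^2(\Omg)\cap H^1_0(\Omg)$ by continuity of both sides in the $H^2$-norm.

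The starting point is the pointwise identity
\[
(\p_{km}u)(\p_{kj}u) - (\p_{mj}u)(\p_{kk}u) = \p_k\big[(\p_m u)(\p_{kj}u)\big] - \p_j\big[(\p_m u)(\p_{kk}u)\big],
\]
a direct consequence of the product rule (note that $u$ is assumed real so conjugation plays no role). Integrating this over $\Omg$ and applying the divergence theorem on the Lipschitz polyhedron reduces the claim to showing
\[
\int_{\p\Omg} (\p_m u)\big[(\p_{kj}u)\nu_k - (\p_{kk}u)\nu_j\big]\dd S = 0.
\]

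The boundary $\p\Omg$ is a finite union of flat faces $F$, each carrying a constant outer unit normal $\nu^F$. On each face the Dirichlet condition $u|_F = 0$ kills all tangential derivatives of $u$, so $\nabla u|_F = v_F\,\nu^F$ with $v_F := \p_{\nu^F}u|_F$. Differentiating the componentwise relation $\p_\ell u|_F = \nu_\ell^F v_F$ tangentially along $F$ (which is legitimate because $\nu^F$ is constant on the flat face) yields the structural form of the Hessian
\[
(\p_{ab}u)|_F = (\p_{\nu^F}^2 u)\,\nu_a^F \nu_b^F + \nu_a^F(\nabla_F v_F)_b + (\nabla_F v_F)_a\,\nu_b^F,
\]
where $\nabla_F$ denotes the tangential gradient on $F$. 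Substituting the expressions for $\p_m u$ and $(\p_{ab}u)|_F$ into the boundary integrand, the $\p_{\nu^F}^2 u$ contributions cancel, and short arithmetic reduces the contribution of $F$ to a linear combination (with $\nu^F$-valued coefficients) of integrals of the form $\int_F v_F (\nabla_F v_F)_\ell\,\dd S = \tfrac12 \int_F (\nabla_F v_F^2)_\ell\,\dd S$.

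Applying the tangential divergence theorem on the flat face $F$ then converts each such integral into an integral over $\p F$, which is a union of edges (for $d=3$) or vertices (for $d=2$) of $\Omg$. At every point of $\p F$ at least two adjacent faces of $\p\Omg$ meet with linearly independent outer normals; since $\nabla u$ on each face is parallel to the corresponding normal, this forces $\nabla u = 0$ along $\p F$, and in particular $v_F = 0$ on $\p F$, so the lower-dimensional boundary terms vanish. The main obstacle I expect is the density step: the second partial derivatives of $u\in H^2(\Omg)\cap H^1_0(\Omg)$ have no a priori trace on the codimension-two skeleton of $\p\Omg$, so the formal face-by-face manipulation above must be carried out on a smooth approximating sequence $u_n\in C^\infty(\overline{\Omg})\cap H^1_0(\Omg)$ and the resulting identity passed to the limit using continuity in the $H^2$-norm of the two bilinear forms appearing on either side.
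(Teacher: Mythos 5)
You should first note that the paper does not actually prove Proposition~\ref{prop:byparts}: it is quoted directly from \cite[Lemma A.1]{LR17} (which in turn builds on \cite[Lemma 4.3.1.2]{Gr} for $d=2$), and the proof there follows essentially the route you sketch. Your formal computation is correct for $u\in C^\infty(\ov{\Omg})$ with $u|_{\p\Omg}=0$: the pointwise divergence identity, the structure formula for the Hessian on a flat face (legitimate because the normal is constant there), the cancellation of the $\p_\nu^2u$ terms, and the reduction to tangential gradients of $(\p_\nu u)^2$ on each face, which are then pushed to the edges where $\nb u$ vanishes because two linearly independent normals meet. So the strategy is the right one and matches the cited source.

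The genuine gap is precisely the step you flag and then leave open: the density, in the $H^2$-norm, of smooth functions vanishing on $\p\Omg$ inside $H^2(\Omg)\cap H^1_0(\Omg)$. On a domain with edges and vertices this is not a routine mollification argument — $C^\infty(\ov{\Omg})$ is dense in $H^2(\Omg)$, but imposing the Dirichlet condition while keeping $H^2$-convergence is delicate exactly near the singular set of the boundary, and this approximation is where essentially all of the work in \cite[Lemma A.1]{LR17} and in Grisvard's treatment is concentrated. A telling symptom is that your argument nowhere uses convexity, even though the statement assumes it: the hypothesis must be consumed in the limiting procedure at the edges and vertices (for an approximating sequence that does not vanish identically near the singular set, the codimension-two contributions do not simply disappear; one has to show that they vanish, or at least have a favourable sign, in the limit, and this is where the geometry of the corners enters). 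As written, then, you have a correct formal computation for smooth Dirichlet data plus an unproved and non-trivial approximation claim; to close the argument you would need to establish that density statement for convex polyhedra, or simply cite \cite[Lemma A.1]{LR17} as the paper does.
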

A direct consequence of the above proposition will be useful in the following.
\begin{cor}\label{cor:byparts}
	Let $\Omg\subset\dR^d$, $d \in\{2,3\}$, be a bounded convex polyhedral domain and let $u\in H^2(\Omg)\cap H^1_0(\Omg)$. Then 
	\[
	\Re\left\{\int_\Omg (\p_{km} u)(\ov{\p_{kj} u})\dd x\right\} = 
	\Re\left\{\int_\Omg (\p_{mj} u)(\ov{\p_{kk} u})\dd x\right\}
	\]
	for all $j,k,m\in\{1,2,3\}$ if $d = 3$ or for all $j,k,m\in\{1,2\}$ if $d = 2$.  
\end{cor}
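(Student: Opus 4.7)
The plan is to reduce the complex-valued identity to the real-valued one stated in Proposition~\ref{prop:byparts} by taking real and imaginary parts of $u$ separately. Write $u = v + \ii w$ with $v := \Re u$ and $w := \Im u$. Since the real and imaginary parts of a function in $H^2(\Omg)\cap H^1_0(\Omg)$ lie in the same space (the space is invariant under complex conjugation and linear over $\dR$), both $v$ and $w$ belong to $H^2(\Omg)\cap H^1_0(\Omg)$ and are real-valued, so Proposition~\ref{prop:byparts} applies to each of them.

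Next I would compute the integrand on each side explicitly. For any indices $k,m,j$,
\[
\Re\!\left[(\p_{km} u)(\ov{\p_{kj} u})\right] = (\p_{km} v)(\p_{kj} v) + (\p_{km} w)(\p_{kj} w),
\]
because the imaginary cross terms cancel after taking the real part. The same pointwise identity holds with $(k,m)$ and $(k,j)$ replaced by $(m,j)$ and $(k,k)$:
\[
\Re\!\left[(\p_{mj} u)(\ov{\p_{kk} u})\right] = (\p_{mj} v)(\p_{kk} v) + (\p_{mj} w)(\p_{kk} w).
\]

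Integrating these identities over $\Omg$ and applying Proposition~\ref{prop:byparts} to $v$ and to $w$ separately yields
\[
\int_\Omg (\p_{km} v)(\p_{kj} v)\,\dd x = \int_\Omg (\p_{mj} v)(\p_{kk} v)\,\dd x, \qquad \int_\Omg (\p_{km} w)(\p_{kj} w)\,\dd x = \int_\Omg (\p_{mj} w)(\p_{kk} w)\,\dd x,
\]
and summing these two equalities gives the claim. There is essentially no obstacle here; the only point to verify is the innocuous fact that real and imaginary parts preserve membership in $H^2(\Omg)\cap H^1_0(\Omg)$, which is a standard consequence of the definition of these Sobolev spaces via $\dR$-linear operations and the fact that the trace map commutes with complex conjugation.
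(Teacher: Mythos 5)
Your proof is correct and follows exactly the paper's argument: decompose $u$ into real and imaginary parts, apply Proposition~\ref{prop:byparts} to each, and sum. The explicit pointwise computation of the real parts of the integrands is a welcome (and accurate) elaboration of the step the paper leaves implicit.
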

\begin{proof}
	Clearly, we have $\Re u,\Im u \in H^2(\Omg)\cap H^1_0(\Omg)$. By Proposition~\ref{prop:byparts} we get
	\[
	\begin{aligned}
	\int_\Omg \p_{km}(\Re u)\p_{kj}(\Re u)\dd x &= 
	\int_\Omg \p_{mj}(\Re u)\p_{kk} (\Re u)\dd x\\
	\int_\Omg \p_{km}(\Im u)\p_{kj}( \Im u)\dd x &= 
	\int_\Omg \p_{mj}(\Im u)\p_{kk}(\Im u)\dd x,
	\end{aligned}
	\]
	from which the desired identity immediately follows.
\end{proof}
\section{Proof of Theorem~\ref{thm:main1}}\label{sec:proof1}
We will prove the theorem first for convex polygons.
The eigenvalue inequality for general convex domains in $\dR^2$ will be then derived from the eigenvalue inequality for convex polygons via approximation.
Assume until the last step of the proof that $\Omg\subset\dR^2$ is a convex polygon.

\smallskip

\noindent {\it Step 1: construction of a trial subspace of $H^1(\Omg)$.}
For the sake of brevity we use the notation $\lm = \lm_k^b(\Omg)$. By the consequence~\eqref{eq:Lk} of the min-max principle
for the magnetic Laplacian with Dirichlet boundary conditions~\eqref{eq:minmaxD} there exists a subspace
$\cL \subset H^1_0(\Omg)$ with $\dim\cL = k$ such that
\begin{equation}\label{eq:cL}
	\frh_{b,{\rm D}}^\Omg[u] \le \lm \|u\|^2_{L^2(\Omg)},\qquad \text{for all}\,\, u\in\cL.
\end{equation}
Let $v\in H^1_0(\Omg)$ be an eigenfunction of $\sfH_{b,\rm D}^\Omg$ corresponding
to the eigenvalue $\lm$. By~\eqref{eq:convex} we infer that $v\in H^2(\Omg)\cap H^1_0(\Omg)$ and, in particular, we get as a consequence that $\p_2 v\in H^1(\Omg)$. Let us define the subspace $\cM$ of $H^1(\Omg)$ by
\begin{equation}\label{eq:M}
	\cM := \cL + {\rm span}\,\{\p_2 v\}.
\end{equation}
Our next aim is to verify that $\dim\cM = k +1$. To this aim it suffices to show that $\p_2 v\notin H^1_0(\Omg)$.
We will prove this statement by contradiction.
Suppose for the moment that $\p_2 v \in H^1_0(\Omg)$.
Let us choose the side $\G\subset\p\Omg$ of the polygon $\Omg$ such that the (outer) unit normal vector $\nu_\G = (\nu_{\G,1},\nu_{\G,2})^\top$ to this side is not parallel to the $x_1$-axis. It is clearly possible to do, because in any polygon there are sides not parallel to each other. Let us also denote by $\tau_\G = (-\nu_{\G,2},\nu_{\G,1})^\top$ a unit tangential vector to $\G$.  It follows from $v|_{\p\Omg} = 0$ that the tangential derivative of $v$ on $\G$ vanishes
\[
	\tau_\G\cdot(\nb v)|_\G = -\nu_{\G,2}\p_1 v|_\G + \nu_{\G,1}\p_2 v|_\G = -\nu_{\G,2}\p_1 v|_\G =0.
\]
As a consequence we get since $\nu_{\G,2}\ne 0$ that $\p_1 v|_\G = 0$. Altogether, the function $v\in H^2(\Omg)\cap H^1_0(\Omg)$ satisfies the differential equation
$-(\nb - \ii {\bf A})^2 v = \lm v$ in $\Omg$ and its Neumann trace on $\G$ vanishes $\p_\nu v|_{\G} = \nu_{\G,1}\p_1 v|_{\G} + \nu_{\G,2}\p_2 v|_\G =  0$. Hence, we get by Lemma~\ref{lem:BR} that $v$ is identically zero in $\Omg$ leading to a contradiction. 

\smallskip

\noindent {\it Step 2: application of the min-max principle.}  
Any $w\in\cM\subset H^1(\Omg)$ can be represented as
\begin{equation}\label{eq:w}
w = u + c\p_2 v
\end{equation}
with some $c\in\dC$ and $u\in\cL$. Substituting $w$ into the quadratic form for the magnetic Neumann Laplacian on the convex polygon $\Omg$ we get   
\begin{equation}\label{eq:frw}
\begin{aligned}
	\frh_{b,\rm N}^\Omg[w] 
	&= \|(\nb - \ii {\bf A})u\|^2_{L^2(\Omg;\dC^2)}\\
	&\qquad + 2\Re\left\{\left((\nb - \ii {\bf A})u, 
	(\nb - \ii {\bf A})c\p_2 v\right)_{L^2(\Omg;\dC^2)}\right\} 
	+ \|(\nb -\ii {\bf A})c\p_2 v\|^2_{L^2(\Omg;\dC^2)}.
\end{aligned}	
\end{equation}
We will analyse the three terms appearing on the right-hand side in the above equation separately.
We derive from~\eqref{eq:cL} 
\begin{equation}\label{eq:first}
	\|(\nb - \ii {\bf A})u\|^2_{L^2(\Omg;\dC^2)} \le \lm\|u\|^2_{L^2(\Omg)}.
\end{equation}
Note that
\[
	-(\nb - \ii {\bf A})^2\p_2 v 
	= 
	-\Delta \p_2 v + 2\ii b x_1\p_2^2 v + b^2 x_1^2 \p_2 v\\
	= -\p_2(\nb - \ii{\bf A})^2 v = \lm \p_2 v\in L^2(\Omg),
\]
where we first evaluated $\Delta\p_2 v$ in the distributional sense and used that $v$ is an eigenfunction
of $\sfH_{b,\rm D}^\Omg$ corresponding to the eigenvalue $\lm$. In particular, we also get as a consequence
\[
\Delta \p_2 v = 2\ii b x_1\p_2^2 v + b^2x_1^2\p_2 v -\lm\p_2 v\in L^2(\Omg).
\]
By~\eqref{eq:integrationbyparts} we can perform the integration by parts
\begin{equation}\label{eq:second}
	\left((\nb - \ii {\bf A})u, 
	(\nb - \ii {\bf A})c\p_2 v\right)_{L^2(\Omg;\dC^2)}
	=
	-\left(u, 
	(\nb - \ii {\bf A})^2c\p_2 v\right)_{L^2(\Omg)} = 
	\lm(u,c\p_2v)_{L^2(\Omg)}, 
\end{equation}
where the boundary terms vanished since $u\in H^1_0(\Omg)$. 

Next, we decompose $v = v_1 + \ii v_2$ with real-valued
$v_1,v_2\in H^2(\Omg)\cap H^1_0(\Omg)$. Then we get
\begin{equation}\label{eq:p2v}
\begin{aligned}
	\|(\nb -\ii {\bf A})\p_2 v\|^2_{L^2(\Omg;\dC^2)} & =
	\|\nb \p_2 v_1 + {\bf A}\p_2 v_2\|^2_{L^2(\Omg;\dC^2)}
	+ \|\nb \p_2 v_2 - {\bf A}\p_2 v_1\|^2_{L^2(\Omg;\dC^2)}\\
	&=
	\int_\Omg\left(|\p_2^2 v_1 + bx_1\p_2 v_2|^2 + |\p_1\p_2 v_1|^2\right)\dd x\\
	&\qquad+ \int_\Omg \left(|\p_2^2 v_2 -bx_1 \p_2 v_1|^2
	+ |\p_1\p_2 v_2|^2\right)\dd x. 
\end{aligned}
\end{equation}
Since $\Omg$ is a convex polygon and $v_1,v_2\in H^2(\Omg)\cap H^1_0(\Omg)$ are real-valued, we can apply the identity in Proposition~\ref{prop:byparts}
to get
\begin{equation}\label{eq:identity}
\int_{\Omg}|\p_1\p_2 v_j|^2\dd x =  \int_\Omg
\p_1^2 v_j\p_2^2 v_j\dd x,\qquad j=1,2.
\end{equation}
Recall that $v$ is an eigenfunction of $\sfH_{b,\rm D}^\Omg$ corresponding to the eigenvalue $\lm$. Thus, $v$ satisfies the partial differential equation
\begin{equation}\label{eq:v}
	-\Delta v + 2\ii b x_1\p_2 v +b^2 x_1^2 v = \lm v.  
\end{equation}
Using the above decomposition $v = v_1+\ii v_2$ with real-valued $v_1,v_2$ we derive from~\eqref{eq:v}
\begin{equation}
\begin{cases}
-\Delta v_1 - 2bx_1 \p_2 v_2 + b^2x_1^2 v_1 = \lm v_1,\\
-\Delta v_2 + 2bx_1\p_2 v_1 + b^2 x_1^2 v_2 = \lm v_2.
\end{cases}
\end{equation}
From the above two equations we can easily express 
$\p_1^2 v_1$ and $\p_1^2 v_2$  as follows
\begin{equation}\label{eq:v1v2}
\begin{aligned}
	\p_1^2 v_1 &= -\p_2^2 v_1 -2bx_1\p_2 v_2
	+b^2 x_1^2 v_1 - \lm v_1,\\
	\p_1^2 v_2 & = -\p_2^2 v_2 + 2bx_1\p_2 v_1 + b^2x_1^2v_2
	-\lm v_2.
\end{aligned}	
\end{equation}
Combining the identity~\eqref{eq:identity} for $j = 1$ with the first equation in~\eqref{eq:v1v2} we obtain
\begin{equation}\label{eq:p1p2v1}
\begin{aligned}
	\int_\Omg|\p_1\p_2 v_1|^2\dd x 
	&=
	\int_\Omg (-|\p_2^2 v_1|^2 -2bx_1 \p_2 v_2\p_2^2v_1 + 
	b^2 x_1^2 v_1\p_2^2 v_1  -\lm v_1\p_2^2 v_1)\dd x\\
	&=
	\int_\Omg (-|\p_2^2 v_1|^2 -2bx_1 \p_2 v_2\p_2^2v_1 - 
	b^2 x_1^2 |\p_2v_1|^2  +\lm |\p_2v_1|^2)\dd x,
\end{aligned}
\end{equation}
where we performed twice the integration by parts in the second step.
Analogously we find from the identity~\eqref{eq:identity} for $j=2$ combined with the second equation in~\eqref{eq:v1v2}
\begin{equation}\label{eq:p1p2v2}
\begin{aligned}
\int_\Omg|\p_1\p_2 v_2|^2\dd x&=
\int_\Omg\big(-|\p_2^2v_2|^2 + 2b x_1 \p_2 v_1 \p_2^2v_2 +
bx_1^2 v_2\p_2^2 v_2 -\lm v_2\p_2^2 v_2\big)\dd x  \\
&=
\int_\Omg\big(-|\p_2^2v_2|^2 + 2b x_1 \p_2 v_1 \p_2^2v_2 -
b^2x_1^2 |\p_2 v_2|^2 +\lm |\p_2 v_2|^2\big)\dd x. 
\end{aligned}
\end{equation}
It follows from~\eqref{eq:p1p2v1} and~\eqref{eq:p1p2v2}
that
\[
	\int_\Omg\big(|\p_1\p_2 v_1|^2 + |\p_1\p_2 v_2|^2\big)\dd x = \lm\|\p_2 v\|^2_{L^2(\Omg)}
	-\int_\Omg |\p_2^2 v_1 + bx_1\p_2 v_2|^2\dd x -\int_\Omg|
	\p_2^2v_2-bx_1\p_2 v_1|^2\dd x.
\]
Substituting the last identity into~\eqref{eq:p2v} we get
\begin{equation}\label{eq:third}
\|(\nb -\ii {\bf A})\p_2 v\|^2_{L^2(\Omg;\dC^2)}
= \lm\|\p_2 v\|^2_{L^2(\Omg)}.
\end{equation}
Finally, we get
from~\eqref{eq:first},~\eqref{eq:second} and~\eqref{eq:third} that
\[
\begin{aligned}
	\frh_{b,\rm N}^\Omg[w]&\le \lm\|u\|^2_{L^2(\Omg)} + 
	2\lm\Re\big\{(u,c\p_2 v)_{L^2(\Omg)}\big\} + |c|^2\lm\|\p_2 v\|^2_{L^2(\Omg)}\\
	& = \lm\|u+c\p_2 v\|^2_{L^2(\Omg)} = \lm\|w\|^2_{L^2(\Omg)}.
\end{aligned}	
\]
We have shown that for any $w\in\cM\subset H^1(\Omg)$ it holds that $\frh_{b,\rm N}^\Omg[w]\le \lm\|w\|^2_{L^2(\Omg)}$. Since $\cM$ is a $(k+1$)-dimensional linear subspace of $H^1(\Omg)$, we get by the min-max principle~\eqref{eq:minmaxN} that $\mu_{k+1}^b(\Omg)\le \lm = \lm_k^b(\Omg)$. Thus, the inequality in the theorem is proved for convex polygons.

\smallskip

\noindent{\it Step 3: general convex domains.}
For any bounded convex domain $\Omg\subset\dR^2$ one can construct by~\cite{Br08} a sequence of convex polygons $\{\Omg_n\}_{n\in\dN}$ such that the inclusion $\Omg\subset\Omg_n$ holds for all $n\in\dN$ and that $|\Omg_n\sm\Omg|\arr 0$. Let us fix $k\in\dN$.
The min-max principle for the magnetic Laplacian with Dirichlet boundary conditions~\eqref{eq:minmaxD} and the inclusion $\Omg\subset\Omg_n$ immediately imply that
\begin{equation}\label{eq:incl_ineq1}
	\lm_k^b(\Omg) \ge \lm_k^b(\Omg_n),\qquad \text{for all}\,\,n\in\dN.
\end{equation}  
It follows from Proposition~\ref{prop:semicont} in Appendix~\ref{app} that
\begin{equation}\label{eq:incl_ineq2}
	\liminf_{n\arr\infty}\mu_{k+1}^b(\Omg_n) \ge \mu_{k+1}^b(\Omg).
\end{equation}
Combining~\eqref{eq:incl_ineq1} and~\eqref{eq:incl_ineq2} with the eigenvalue inequality in the theorem already shown for convex polygons, we get
\[
	\mu_{k+1}^b(\Omg) \le \liminf_{n\arr\infty}\mu_{k+1}^b(\Omg_n) \le
	\liminf_{n\arr\infty}\lm_{k}^b(\Omg_n) \le \lm_k^b(\Omg),
\]
by which the inequality in the theorem is proved also for general convex domains.

\section{Proof of Theorem~\ref{thm:main2}}\label{sec:proof2}
The proof of this theorem follows similar strategy as the proof of Theorem~\ref{thm:main1}. However, there will also be some new ideas related to the symmetry assumption on the domain. We will prove the theorem first for convex polyhedral domains in $\dR^3$ satisfying the symmetry condition.
The eigenvalue inequality for general convex domains in $\dR^3$ will be then derived via approximation.
Assume until the last step of the proof that $\Omg\subset\dR^3$ is a convex polyhedral domain such that $\sfJ(\Omg) = \Omg$, where the mapping $\sfJ$ is defined as in~\eqref{eq:J}.

\smallskip

\noindent {\it Step 1: Symmetry of the eigenfunction.}
For the sake of brevity we use the notation $\lm = \lm_k^b(\Omg)$ for the $k$-th eigenvalue of $\sfH_{b,\rm D}^\Omg$. Recall that this eigenvalue is assumed to be simple.
Let $v\in H^2(\Omg)\cap H^1_0(\Omg)$ be an eigenfunction of $\sfH_{b,\rm D}^\Omg$ corresponding
to the eigenvalue $\lm$. 
Let us consider the function $\hat{v} := v\circ\sfJ \in H^2(\Omg)\cap H^1_0(\Omg)$.
We get that
\[
\begin{aligned}
	-(\nb - \ii\bA)^2\hat{v} &= -\Delta(v\circ\sfJ) + 2\ii b x_1\p_2(v\circ\sfJ) + b^2x_1^2 (v\circ\sfJ)\\
	&=
	-(\Delta v)\circ\sfJ - 2\ii b x_1(\p_2v)\circ\sfJ + b^2x_1^2 (v\circ\sfJ)\\
	& = \big(-\Delta v + 2\ii b x_1\p_2 v + b^2 x_1^2 v\big)\circ \sfJ = 0.
\end{aligned}	
\]
Hence, $\hat{v}$ is also an eigenfunction of $\sfH_{b,\rm D}^\Omg$ corresponding to the eigenvalue $\lm$. By simplicity of this eigenvalue we conclude that $\hat{v} = z v$ with some $z\in\dC$, $|z| = 1$.

\smallskip
\noindent {\it Step 2: construction of a trial subspace of $H^1(\Omg)$.}
 By the min-max principle
for the magnetic Laplacian with Dirichlet boundary conditions~\eqref{eq:minmaxD} there exists a subspace
$\cL \subset H^1_0(\Omg)$ with $\dim\cL = k$ such that
\begin{equation}\label{eq:cL2}
	\frh_{b,{\rm D}}^\Omg[u] \le \lm \|u\|^2_{L^2(\Omg)},\qquad \text{for all}\,\, u\in\cL.
\end{equation}
For the eigenfunction $v\in H^2(\Omg)\cap H^1_0(\Omg)$
of the magnetic Dirichlet Laplacian on $\Omg$ corresponding to the eigenvalue $\lm$, we get $\p_2 v, \p_3 v\in H^1(\Omg)$. Let us define the subspace $\cM$ of $H^1(\Omg)$ by
\begin{equation}\label{eq:M2}
	\cM := \cL + {\rm span}\,\{\p_2 v,\p_3 v\}.
\end{equation}
Our next aim is to verify that $\dim\cM = k + 2$.
In three dimensions this argument is slightly more involved than in two dimensions. Clearly, we have $\dim\cM \le k+2$. Suppose for the moment that $\dim\cM < k+2$. Hence, there would exist $c_1,c_2\in\dC$ (not both equal to zero) such that $c_1\p_2 v+ c_2\p_3 v\in\cL \subset H^1_0(\Omg)$. Without loss of generality we can assume that real parts not of both $c_1,c_2$ are equal to zero.
Let $\G\subset\p\Omg$ be a face of the polyhedral domain $\Omg$ and let $\tau_1\in\dR^3$ and $\tau_2\in\dR^3$ be  linearly independent unit tangential vectors to $\G$. It is always possible to choose the face $\G$ so that 
\begin{equation}\label{eq:span}
	{\rm span}\,\{\tau_1,\tau_2,(0,c_1,c_2)^\top\} = \dC^3,
\end{equation}
as
otherwise, the non-trivial vector $(0,\Re c_1, \Re c_2)^\top$ would be tangential to every face of the polyhedral domain $\Omg$, which is not possible for clear geometric reasons. Let us assume that the face $\G$ is chosen so that~\eqref{eq:span} holds. It would follow from $v,c_1\p_2 v+c_2\p_3 v\in H^1_0(\Omg)$
that
 \[
	\tau_1\cdot(\nb v)|_\G =  	\tau_2\cdot(\nb v)|_\G = 0,\qquad(0,c_1,c_2)^\top\cdot(\nb v)|_\G = 0.
\]
Hence, we would get using~\eqref{eq:span} that $(\nb v)|_\G = 0$ and, in particular, we would have $\p_\nu v|_\G = 0$.
Thus, the function $v\in H^2(\Omg)\cap H^1_0(\Omg)$ would satisfy the differential equation
$-(\nb - \ii {\bf A})^2 v = \lm v$ in $\Omg$ and $\p_\nu v|_\G = 0$. By Lemma~\ref{lem:BR} we would get that $v$ is identically zero in $\Omg$ leading to a contradiction. 

\smallskip

\noindent {\it Step 3: application of the min-max principle.}  
Any $w\in\cM\subset H^1(\Omg)$ can be represented as
\begin{equation}\label{eq:w}
	w = u + c_1\p_2 v+c_2\p_3 v
\end{equation}
with some $c_1,c_2\in\dC$ and $u\in\cL$. Substituting $w$ into the quadratic form for the magnetic Neumann Laplacian on the domain $\Omg$ we get   
\begin{equation}\label{eq:frw2}
	\begin{aligned}
		\frh_{b,\rm N}^\Omg[w] 
		&= \|(\nb - \ii {\bf A})u\|^2_{L^2(\Omg;\dC^3)}
		 + 2\Re\left\{\left((\nb - \ii {\bf A})u, 
		(\nb - \ii {\bf A})(c_1\p_2 v +c_2\p_3 v )\right)_{L^2(\Omg;\dC^3)}\right\} \\
		&\qquad\qquad\qquad + \|(\nb -\ii {\bf A})(c_1\p_2 v+c_2\p_3 v)\|^2_{L^2(\Omg;\dC^3)}.
	\end{aligned}	
\end{equation}
We will analyse the three terms appearing on the right hand side in the above equation separately.
We derive from~\eqref{eq:cL2} 
\begin{equation}\label{eq:first2}
	\|(\nb - \ii {\bf A})u\|^2_{L^2(\Omg;\dC^3)} \le \lm\|u\|^2_{L^2(\Omg)}.
\end{equation}
Note that
\begin{equation}\label{eq:p2p3v}
\begin{aligned}
&-(\nb - \ii {\bf A})^2(c_1\p_2 v+c_2\p_3 v) \\
&\qquad = 
-\Delta (c_1\p_2 v+c_2\p_3 v)  + 2\ii b x_1\p_2(c_1\p_2 v+c_2\p_3 v) + b^2 x_1^2  (c_1\p_2 v+c_2\p_3 v)\\
&\qquad = -(c_1\p_2+ c_2\p_3)(\nb - \ii{\bf A})^2 v = \lm  (c_1\p_2 v+c_2\p_3 v)\in L^2(\Omg).
\end{aligned}
\end{equation}
where the $\Delta\p_j v$, $j=2,3$, should be understood first in the distributional sense. As a direct consequence of the last computation we infer that $\Delta\p_j v\in L^2(\Omg)$ for $j=2,3$.
By~\eqref{eq:integrationbyparts} we can perform the integration by parts
\begin{equation}\label{eq:second2}
\begin{aligned}	
	&\left((\nb - \ii {\bf A})u, 
	(\nb - \ii {\bf A}) (c_1\p_2 v+c_2\p_3 v)\right)_{L^2(\Omg;\dC^3)}\\
	&\qquad =
	-\left(u, 
	(\nb - \ii {\bf A})^2 (c_1\p_2 v+c_2\p_3 v)\right)_{L^2(\Omg)} = 
	\lm(u, c_1\p_2 v+c_2\p_3 v)_{L^2(\Omg)}, 
\end{aligned}	
\end{equation}
where the boundary terms vanished since $u\in H^1_0(\Omg)$. 

The analysis of the last term on the right hand side in~\eqref{eq:frw2} is the most involved. We get
\begin{equation}\label{eq:gradp2p3v}
	\begin{aligned}
		&\|(\nb -\ii {\bf A})(c_1\p_2 v + c_2\p_3 v)\|^2_{L^2(\Omg;\dC^3)}\\
		&\qquad =
		|c_1|^2\|(\nb -\ii {\bf A})\p_2 v\|^2_{L^2(\Omg;\dC^3)} + 2\Re\big\{(
		(\nb-\ii{\bf A})c_1\p_2 v, (\nb-\ii{\bf A})c_2\p_3 v)_{L^2(\Omg;\dC^3)} \big\}\\
		&\qquad\qquad +
		|c_2|^2\|(\nb -\ii {\bf A})\p_3 v\|^2_{L^2(\Omg;\dC^3)}.  
	\end{aligned}
\end{equation}
For any face $\G\subset\p\Omg$ of the polyhedral domain $\Omg$ we derive from $v|_\G = 0$ that
$\nb v|_{\G} = \big(\p_\nu v|_{\G}\big)\nu$.
Hence, we get that $\p_i v|_\G = \nu_i \p_\nu v|_{\G}$, $i\in\{1,2,3\}$, where $\nu = (\nu_1,\nu_2,\nu_3)^\top$.
In particular, we get for any $i\in\{2,3\}$ using integration by parts
\begin{align}
	\notag 2\Re\left(\ii\int_\Omg x_1\p_2v \ov{\p_{ii} v}\dd x\right)& =
	-2\Re\left(
	\ii
	\int_\Omg x_1\p_{2i} v\ov{\p_i v}\dd x\right) + 2\Re\left(\int_{\p\Omg} \ii  x_1 \nu_i(\p_2 v|_{\p\Omg})\ov{(\p_i v|_{\p\Omg})} \dd \s\right)\\
	\notag &
	=-2\Re\left(
	\ii
	\int_\Omg x_1\p_{2i} v\ov{\p_i v}\dd x\right) + \Re\left(\int_{\p\Omg} \ii  x_1 \nu_i^2 \nu_2\big|\p_\nu v|_{\p\Omg}\big|^2 \dd \s\right)\\
	\notag &=-2\Re\left(
	\ii
	\int_\Omg x_1\p_{2i} v\ov{\p_i v}\dd x\right)\\
	& =\Re\left(
	\ii
	\int_\Omg x_1\big(\p_{i} v\ov{\p_{2i} v}-\p_{2i} v\ov{\p_i v}\big)\dd x\right).
	\label{eq:Reidentity}
\end{align}	
Using that $v$ is an eigenfunction of $\sfH_{b,\rm D}^\Omg$ corresponding to the eigenvalue $\lm$ and
performing the integration by parts with the help of~\eqref{eq:Reidentity} and Corollary~\ref{cor:byparts} we get for $i\in\{2,3\}$
\begin{align}
	\notag\lm\int_\Omg |\p_i v|^2\dd x &=-\lm\int_\Omg v \ov{\p_{ii} v}\dd x\\
	\notag &=
	\int_\Omg (\nb-\ii {\bf A})^2v \ov{\p_{ii} v}\dd x\\
	\notag& = \Re 
	\int_\Omg (\nb-\ii {\bf A})^2v \ov{\p_{ii} v}\dd x\\
	\notag&= \Re
	\int_\Omg  \big(\p_{11}v + \p_{22} v + \p_{33} v - 2\ii bx_1\p_2 v - b^2x_1^2 v\big) \ov{\p_{ii}v}\dd x\\
	\notag &= \Re 
	\int_\Omg  \big(\p_{1i} v\ov{\p_{1i} v}
	+\p_{2i}v\ov{\p_{2i} v}+
	\p_{3i}v\ov{\p_{3i} v}- 2\ii b x_1  \p_2 v \ov{\p_{ii}v} +  b^2 x_1^2 \p_iv \ov{\p_i v}\big) \dd x\\
	\notag& = \Re\int_\Omg  \big(|\p_{1i} v|^2
	+|\p_{2i} v|^2\!+\!
	|\p_{3i} v|^2- \ii b x_1  \p_{i} v \ov{\p_{2i}v} \!+\! \ii b x_1\p_{2i} v\ov{\p_i v}  +  b^2 x_1^2 |\p_i v|^2\big) \dd x\\
	& = \Re\big((\nb-\ii{\bf A}) \p_i v, (\nb -\ii\bA)\p_i v\big)_{L^2(\Omg;\dC^3)} = \|(\nb -\ii\bA) \p_iv\|^2_{L^2(\Omg;\dC^3)}.\label{eq:piv}
\end{align}	
Moreover, we find via the integration by parts formula~\eqref{eq:integrationbyparts} 
and using  identity~\eqref{eq:p2p3v}
\begin{equation}\label{eq:p2p3vbyparts}
\begin{aligned}
	&\big((\nb-\ii{\bf A})\p_2 v, (\nb-\ii{\bf A})\p_3 v\big)_{L^2(\Omg;\dC^3)} \\
	&\qquad =
	\big(\p_2 v, -(\nb-\ii{\bf A})^2\p_3 v\big)_{L^2(\Omg)}
	+ \big\langle \p_2 v|_{\p\Omg}, \p_\nu(\p_3 v)|_{\p\Omg}
	-\ii \nu\cdot(\bA|_{\p\Omg}) \p_3 v|_{\p\Omg}\big\rangle_{\frac12,-\frac12}\\
	&\qquad  
	=
	\lm\big(\p_2 v, \p_3 v\big)_{L^2(\Omg)}
	+ \big\langle \p_2 v|_{\p\Omg}, \p_\nu(\p_3 v)|_{\p\Omg}
	-\ii \nu\cdot(\bA|_{\p\Omg}) \p_3 v|_{\p\Omg}\big\rangle_{\frac12,-\frac12}.
\end{aligned}	
\end{equation}
Next, our aim is to show that under the imposed symmetry assumption the duality product on the right hand side vanishes. For the function $\hat{v} = v\circ \sfJ\in H^2(\Omg)\cap H^1_0(\Omg)$ constructed in Step 1 we get
\[
\begin{aligned}
	&\big\langle \p_2 v|_{\p\Omg}, \p_\nu(\p_3 v)|_{\p\Omg}
	-\ii \nu\cdot(\bA|_{\p\Omg}) \p_3 v|_{\p\Omg}\big\rangle_{\frac12,-\frac12}\\
	&\qquad
	=\big\langle \p_2 \hat{v}|_{\p\Omg}, \p_\nu(\p_3 \hat{v})|_{\p\Omg}
	-\ii \nu\cdot(\bA|_{\p\Omg}) \p_3 \hat{v}|_{\p\Omg}\big\rangle_{\frac12,-\frac12}\\
	&\qquad =
	((\nb -\ii\bA)\p_2\hat{v},(\nb - \ii\bA)\p_3\hat{v})_{L^2(\Omg;\dC^3)} - \lm(\p_2\hat{v},\p_3\hat{v})_{L^2(\Omg)}\\
	&\qquad
	=
	-
	\big(((\nb -\ii\bA)\p_2v)\circ\sfJ,
	((\nb - \ii\bA)\p_3 v)\circ\sfJ\big)_{L^2(\Omg;\dC^3)}
	+
	\lm
	\big((\p_2v)\circ\sfJ,
	(\p_3 v)\circ\sfJ\big)_{L^2(\Omg)}\\
	&\qquad =
	-
	\big(((\nb -\ii\bA)\p_2v,
	((\nb - \ii\bA)\p_3 v\big)_{L^2(\Omg;\dC^3)}
	+
	\lm
	\big(\p_2v,
	\p_3 v\big)_{L^2(\Omg)}\\
	&\qquad =-\big\langle \p_2 v|_{\p\Omg}, \p_\nu(\p_3 v)|_{\p\Omg}
	-\ii \nu\cdot(\bA|_{\p\Omg}) \p_3 v|_{\p\Omg}\big\rangle_{\frac12,-\frac12} = 0,
\end{aligned}
\]
where in the first step we used $\hat{v} =zv$ with some $z\in\dC$ such that $|z|=1$, in the second step we applied the formula~\eqref{eq:p2p3vbyparts}, in the third step we used that $\hat{v} = v\circ\sfJ$, in the fourth 
step we implicitly used the change of variables $x' = \sfJ x$ and used the symmetry of $\Omg$, in the fifth step we again applied the formula~\eqref{eq:p2p3vbyparts} in the opposite direction. Finally, the whole expression vanished, since we have shown that the duality product is equal to minus itself. 
Hence, we end up with 
the identity
\begin{equation}\label{eq:p23v}
	\Re\big\{\big((\nb - \ii \bA)c_1\p_2 v,(\nb - \ii\bA)c_2\p_3 v\big)_{L^2(\Omg;\dC^3)}\big\} = 
	\lm\Re (c_1\p_2 v, c_2\p_3 v)_{L^2(\Omg)}
\end{equation}
where we implicitly used that $(\p_2v,\p_3 v)_{L^2(\Omg)}
= -(v,\p_{23}v)_{L^2(\Omg)} = -(\p_{23}v,v)_{L^2(\Omg)}$ is a real number.
Substituting~\eqref{eq:piv} and~\eqref{eq:p23v} into~\eqref{eq:gradp2p3v} we arrive at
\begin{equation}\label{eq:third3}
\begin{aligned}
&\|(\nb -\ii {\bf A})(c_1\p_2 v + c_2\p_3 v)\|^2_{L^2(\Omg;\dC^3)}\\
&\qquad= \lm|c_1|^2\|\p_2 v\|^2_{L^2(\Omg)}
+ 2\lm\Re\big\{(c_1\p_2 v,c_2\p_3 v)_{L^2(\Omg)}\big\}
+
\lm |c_2|^2\|\p_3 v\|^2_{L^2(\Omg)}\\
&\qquad = \lm\|c_1\p_2 v + c_2\p_3 v\|^2_{L^2(\Omg)}.
\end{aligned} 
\end{equation}
Finally, we get
from~\eqref{eq:first2},~\eqref{eq:second2} and~\eqref{eq:third3} that
\[
\begin{aligned}
	\frh_{b,\rm N}^\Omg[w]&\le \lm\|u\|^2_{L^2(\Omg)} + 
	2\lm\Re\big\{(u,c_1\p_2 v +c_2\p_3v)_{L^2(\Omg)}\big\} + \lm\|c_1\p_2 v+c_2\p_3 v\|^2_{L^2(\Omg)}\\
	& = \lm\|u+c_1\p_2 v+c_2\p_3 v\|^2_{L^2(\Omg)} = \lm\|w\|^2_{L^2(\Omg)}.
\end{aligned}	
\]
We have shown that for any $w\in\cM\subset H^1(\Omg)$ it holds that $\frh_{b,\rm N}^\Omg[w]\le \lm\|w\|^2_{L^2(\Omg)}$. Since $\cM$ is a $(k+2)$-dimensional linear subspace of $H^1(\Omg)$, we get by the min-max principle that $\mu_{k+2}^b(\Omg)\le \lm = \lm_k^b(\Omg)$. Thus, the inequality in the theorem is proved for convex polyhedral domains invariant under the mapping $\sfJ$.

\smallskip

\noindent{\it Step 4: general convex domains invariant under the mapping $\sfJ$.}
For any bounded convex domain $\Omg\subset\dR^3$ one can construct by~\cite{Br08} a sequence of convex polyhedral
domains $\{\Omg_n\}_{n\in\dN}$ such that the inclusion $\Omg\subset\Omg_n$ holds for all $n\in\dN$ and that $|\Omg_n\sm\Omg|\arr 0$. If the domain $\Omg$ satisfies, in addition, $\Omg = \sfJ(\Omg)$, we can consider the sequence of convex polyhedral domains $\wh{\Omg}_n := \Omg_n\cap\sfJ(\Omg_n)$, which clearly satisfies both conditions $\Omg\subset\wh\Omg_n$ for all $n\in\dN$ and $|\wh\Omg_n\sm\Omg|\arr 0$ and, in addition, we have $\wh\Omg_n = \sfJ(\wh\Omg_n)$.

Let us fix $k\in\dN$.
The min-max principle for the magnetic Laplacian with Dirichlet boundary conditions~\eqref{eq:minmaxD} and the inclusion $\Omg\subset\wh\Omg_n$ immediately imply that
\begin{equation}\label{eq:incl_ineq12}
	\lm_k^b(\Omg) \ge \lm_k^b(\wh\Omg_n),\qquad \text{for all}\,\,n\in\dN.
\end{equation}  
It follows from Proposition~\ref{prop:semicont} in Appendix~\ref{app} that
\begin{equation}\label{eq:incl_ineq22}
	\liminf_{n\arr\infty}\mu_{k+2}^b(\wh\Omg_n) \ge \mu_{k+2}^b(\Omg).
\end{equation}
Combining~\eqref{eq:incl_ineq12} and~\eqref{eq:incl_ineq22} with the eigenvalue inequality in the theorem already shown for convex polygons, we get
\[
\mu_{k+2}^b(\Omg) \le \liminf_{n\arr\infty}\mu_{k+2}^b(\wh\Omg_n) \le
\liminf_{n\arr\infty}\lm_{k}^b(\wh\Omg_n) \le \lm_k^b(\Omg),
\]
by which the inequality in the theorem is proved also for general convex domains invariant under the mapping $\sfJ$.
\section{The case of the disk}
\label{sec:disk}
In this section, we will consider the magnetic Laplacian on the disk. Recall that the Levine-Weinberger inequality in two dimensions reads as follows: the $(k+2)$-th Neumann eigenvalue of the Laplacian on a planar convex domain is not larger than its $k$-th Dirichlet eigenvalue. The index $k+2$ can not be replaced by $k+3$, because the fourth Neumann eigenvalue of the Laplacian on the disk is larger than its first Dirichlet eigenvalue. Thus, it is natural to test the magnetic eigenvalue inequality in Theorem~\ref{thm:main1} also on the disk.

Let $\cB\subset\dR^2$ be the disk of unit radius centred at the origin. In view of the scaling properties $\lm_k^b(t\cB) = t^{-2}\lm_k^{bt^2}(\cB)$
and $\mu_k^b(t\cB) = t^{-2}\mu_k^{bt^2}(\cB)$
valid for any $k\in\dN$ and all $t> 0$ (see~\cite[p. 456]{CLPS23}), it suffices to test the eigenvalue inequality on the unit disk.
The eigenvalues of $\sfH_{b,\rm N}^\cB$ and $\sfH_{b,\rm D}^\cB$ are continuous functions in $b$ (cf.~\cite[Appendix B]{KL22} for the Neumann case, the Dirichlet case is analogous).
Hence, we know that $\mu_4^b(\cB) > \lm_1^b(\cB)$
for all sufficiently small $b> 0$, because the same inequality holds for the usual Laplacian, which corresponds to $b = 0$.
Thus, the index $k+1$ in Theorem~\ref{thm:main1} can not be, in general, replaced by $k+3$. However, this observation does not exclude the possibility that the index $k+1$ can be replaced by $k+2$.

It follows from~\cite[Proposition 2.1]{FFGKS23} that 
$\mu_3^b(\cB) < b$ if and only if $b > 4$,
because the flux $\Phi := \frac{b|\cB|}{2\pi} = \frac{b}{2} > 2$ for $b > 4$. On the other hand, according to~\cite[Lemma 1.4.1]{FH} we have $\lm_1^b(\cB) \ge b$ for all $b > 0$. Hence, we get that $\mu_3^b(\cB) < \lm_1^b(\cB)$ for all $b > 4$. 
In the remaining part of this appendix, we will analyse the case $b\in(0, 4]$ numerically. The numerics is rather simple and reduces to solving explicit transcendental equations in terms of generalized Laguerre functions.

The magnetic Neumann Laplacian $\sfH_{b,\rm N}^{\cB}$ can be decomposed by \eg~\cite[Section 3]{KL22} up to unitary equivalence as
\begin{equation}\label{eq:orth}
	\sfH_{b,\rm N}^{\cB} \simeq \bigoplus_{n\in\dZ} \sfT_{b,n},
\end{equation}
where the self-adjoint fiber operator $\sfT_{b,n}$, $n\in\dZ$, acts in the Hilbert space $L^2((0,1);r\dd r)$ and is associated with the quadratic form
\begin{equation}\label{eq:fiber}
\begin{aligned}
	\frt_{b,n}[f]& := \int_0^1|f'(r)|^2r\dd r +\int_0^1\left(\frac{n}{r}-\frac{br}{2}\right)^2|f(r)|^2r\dd r,\\
	\qquad \dom\frt_{b,n} &:= \big\{f\colon f,f', nfr^{-1}\in L^2((0,1);r\dd r)\big\}.
\end{aligned}	
\end{equation}
Let us denote by $\mu_{n,1} = \mu_{n,1}(b)$, $n\in\dZ$, the lowest eigenvalue of the fiber operator $\sfT_{b,n}$.
In view of the decomposition~\eqref{eq:orth} for any $\lm > 0$
\begin{equation}
\#\{k\in\dN\colon\mu_k^b(\cB) \le \lm\} \ge \#\{n\in\dZ\colon \mu_{n,1}(b) \le \lm\}.   
\end{equation}
Let us denote by $L^a_\gamma$ the generalized Laguerre function (see, \eg, ~\cite[\S 22]{AS} for precise definitions and properties).
By the analysis in \cite[Appendix B]{CLPS23},
$\mu_{n,1}$ is the smallest positive solution of the equation
\begin{equation}\label{key}
	F_{n,b}(\mu) := \frac{\dd}{\dd r}\left(e^{-\frac{br^2}{4}} r^n L^n_{\frac12\big(\frac{\mu}{b}-1\big)}\left(\frac{r^2b}{2}\right)\right)\bigg|_{r=1} = 0,
\end{equation}
and the associated eigenfunction is given by
\[
	v_n(r) =  
	e^{-\frac{br^2}{4}} r^n L^n_{\frac12\big(\frac{\mu_{n,1}}{b}-1\big)}\left(\frac{r^2b}{2}\right).
\]
For the magnetic Dirichlet Laplacian on the disk $\sfH_{b,{\rm D}}^\cB$ we know according to~\cite[Proposition 2.1]{E96} that  $\lm_1^b(\cB)$ is equal to the lowest eigenvalue $\lm_{0,1} = \lm_{0,1}(b)$ of the one-dimensional self-adjoint fiber operator in $L^2((0,1);r\dd r)$
associated with the quadratic form
\begin{equation*}
\begin{aligned}
	\frt_b[f] &:= \int_0^1|f'(r)|^2r\dd r +\int_0^1\frac{b^2r^3}{4}|f(r)|^2\dd r\\
	\dom\frt_{b} &:= \big\{f\colon f,f'\in L^2((0,1);r\dd r), \, f(1) = 0\big\}.
\end{aligned}	
\end{equation*}
This is a direct consequence of the fact that, in the symmetric gauge of the homogeneous magnetic field, the ground state of the magnetic Laplacian on the disk is a radial function.
Mimicking the analysis in~\cite[Appendix B]{CLPS23} we get that
$\lm_{0,1}$ is the smallest positive solution of the equation
\begin{equation}\label{key}
	G_b(\lm) := L^0_{\frac12\big(\frac{\lm}{b}-1\big)}\left(\frac{b}{2}\right)= 0.
\end{equation}
In Figure~\ref{fig}, we have plotted the graphs of $\lm_{0,1}$, $\mu_{0,1}$, $\mu_{1,1}$, $\mu_{-1,1}$, $\mu_{2,1}$ as functions of $b$. In the plotted range of $b$, for any $b >0$ among 
the values 
$\mu_{0,1}(b)$, $\mu_{1,1}(b)$, $\mu_{-1,1}(b)$, $\mu_{2,1}(b)$ at least three do not exceed $\lm_{0,1}(b)$. We also remark that at $b = 2$ the graphs of $\lm_{0,1}$, $\mu_{-1,1}$, and $\mu_{2,1}$ indeed cross at one point. This property can be rigorously checked.
By~\cite[8.970 (4)]{GR} we have the following explicit expression for the generalized Laguerre functions: $L^0_1(x) = 1-x$, $L^{-1}_1(x) = -x$, and $L^2_1(x) = 3-x$. Using these expression we find
\[
\begin{aligned}
	G_2(6) &= L^0_1(1) = 0,\\
	F_{-1,2}(6) &= \frac{\dd}{\dd r}\left(e^{-\frac{r^2}{2}}
		r^{-1} L_1^{-1}(r^2)\right)\bigg|_{r=1}
		=
		-
		\frac{\dd}{\dd r}\left(e^{-\frac{r^2}{2}}
		r\right)\bigg|_{r=1}
		= \left(e^{-\frac{r^2}{2}}\big[r^2-1\big]\right)\bigg|_{r=1}  =0,\\
	F_{2,2}(6) &=	\frac{\dd}{\dd r}\left(e^{-\frac{r^2}{2}}
	r^2 L_1^{2}(r^2)\right)\bigg|_{r=1}
	=\frac{\dd}{\dd r}\left(e^{-\frac{r^2}{2}}
	r^2 (3-r^2)\right)\bigg|_{r=1}\\
	&=
	\left(e^{-\frac{r^2}{2}}\big[r^5-7r^3+6r\big]\right)\bigg|_{r=1} = 0.
\end{aligned}	
\]
\begin{figure}
\includegraphics[width=9cm]{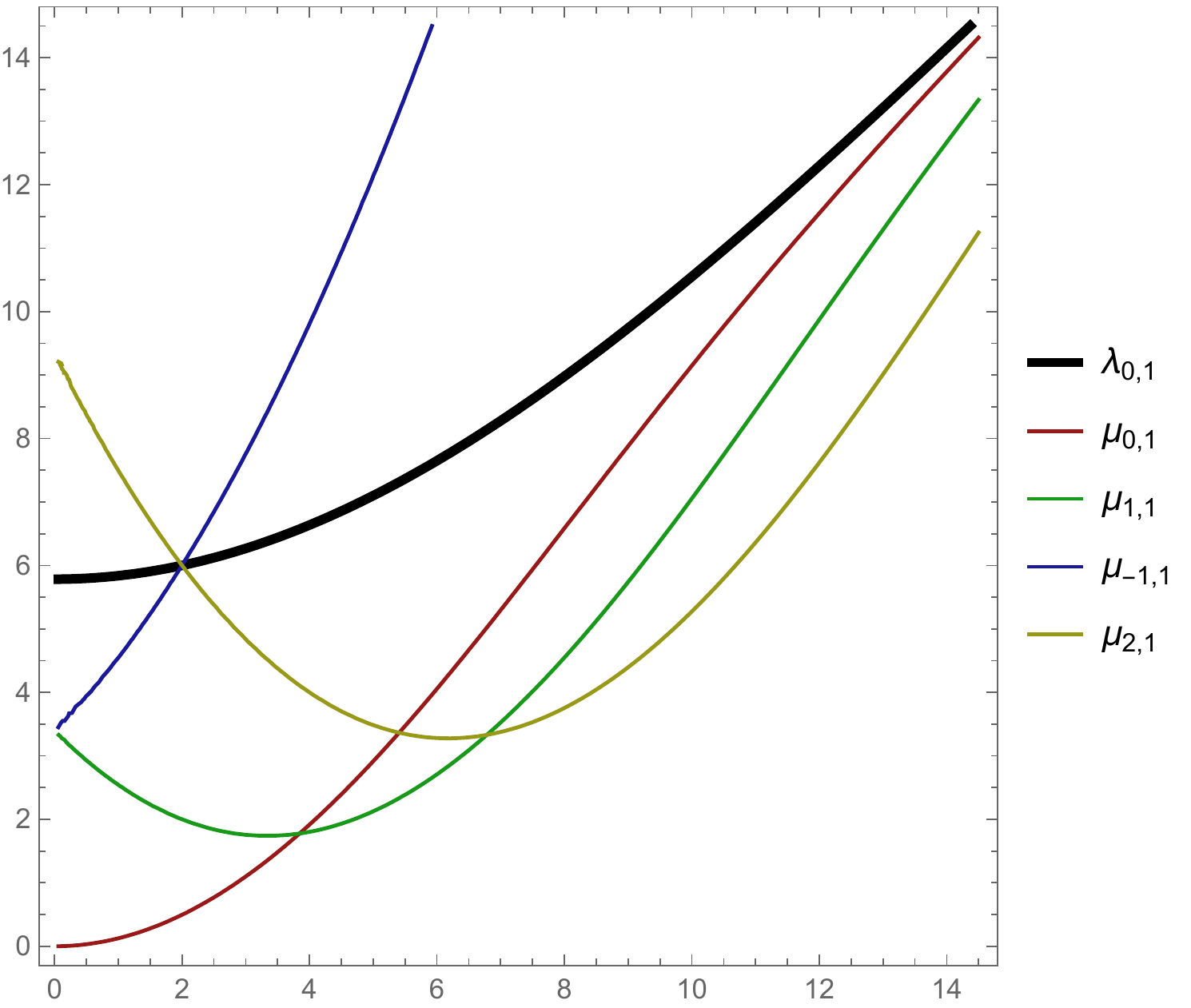}
\caption{The lowest eigenvalues of the fiber operators are plotted as functions of $b$: $\lm_{0,1}$ --  thick black curve; $\mu_{0,1}$ --   thin red curve; $\mu_{1,1}$ -- thin green curve; $\mu_{-1,1}$ --  thin blue curve; $\mu_{2,1}$ --  thin yellow curve.  }
\label{fig}
\end{figure}%
This numerical test provides an evidence for the validity of the inequality $\mu_3^b(\cB)\le \lm_1^b(\cB)$ for all $b > 0$.
Therefore, a counterexample showing sharpness of Theorem~\ref{thm:main1} should either be based on the comparison of higher eigenvalues of the disk or should be based on a domain different from the disk. Thus, it remains an open problem whether
the inequality $\mu_{k+2}^b(\Omg)\le \lm_k^b(\Omg)$
for all $b > 0$ and any $k\in\dN$ holds for any convex domain $\Omg\subset\dR^2$. We would like to refrain from making any conjectures.  
\section*{Acknowledgement}
The author acknowledges the support by the Czech Science Foundation (GA\v{C}R) within the project 21-07129S.
\begin{appendix}
\section{Lower semicontinuity of magnetic Neumann eigenvalues}\label{app}
In this appendix, we establish a lower semi-continuity result for the magnetic Neumann eigenvalues on varying convex domains satisfying an inclusion property.  Although, we expect that under our assumptions the magnetic Neumann eigenvalues are in fact continuous with respect to the variation of the domain, lower semi-continuity turns out to be sufficient for our purposes. General stability results for the eigenvalues of
elliptic operators with Neumann-type boundary conditions under variations of the domain can be found \eg in~\cite{BD02, BL07}, see also the references therein. We expect that the abstract results in~\cite{BL07} can be adjusted to the magnetic Neumann Laplacian, but we prefer to perform a more explicit analysis.

In the proof of a lower semi-continuity result the following lemma is useful.
\begin{lem}[{\cite[Lemma 4.9, Remark 4.2]{MKh}}]
	\label{lem:MKh}
	Let $U,V\subset\dR^d$, $d\ge 2$, be bounded convex domains such that $U\subset V$. Then for any $u\in H^1(V)$ the following inequality holds
	\[
		\int_{V\sm U}|u|^2\dd x \le \frac{2|V\sm U|}{|V|}\int_V|u|^2\dd x + \frac{C(d)({\rm diam}\,(V))^{d+1}|V\sm U|^{1/d}}{|V|}\int_V|\nb u|^2\dd x,
	\]
	where ${\rm diam}\,(V) > 0$ is the diameter of $V$ and the constant $C(d) > 0$ depends only on the dimension.
\end{lem}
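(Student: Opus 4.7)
My plan is to combine a convexity-based pointwise integral representation of $u$ on $V\sm U$ with a carefully chosen averaging and a change-of-variables estimate. Since $V$ is convex, for any $x\in V\sm U$ and any $y\in V$ the segment joining them lies in $V$, so the fundamental theorem of calculus, together with Cauchy--Schwarz and $|x-y|\le\mathrm{diam}(V)$, yields the pointwise bound
\[
|u(x)|^2 \le 2|u(y)|^2 + 2\,\mathrm{diam}(V)^2\int_0^1 |\nb u((1-t)y+tx)|^2\dd t.
\]

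I would then average this inequality over $y\in U$---the choice of $U$ rather than the larger set $V$ turns out to be critical to produce the correct exponent---and integrate in $x\in V\sm U$, obtaining
\[
|U|\int_{V\sm U}|u|^2\dd x \le 2|V\sm U|\int_V|u|^2\dd x + 2\,\mathrm{diam}(V)^2\,J,
\]
where
\[
J := \int_{V\sm U}\int_U\int_0^1|\nb u((1-t)y+tx)|^2\dd t\dd y\dd x.
\]
The claimed inequality is trivial when $|V\sm U|\ge|V|/2$, so one may assume $|U|\ge|V|/2$; dividing by $|U|$ then converts the first term on the right into $\tfrac{2|V\sm U|}{|V|}\int_V|u|^2$ up to a universal constant.

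The heart of the argument is bounding $J$ by $C(d)\,\mathrm{diam}(V)^{d-1}|V\sm U|^{1/d}\int_V|\nb u|^2\dd x$. For each fixed $t\in(0,1)$, the substitution $z=(1-t)y+tx$ keeps $z$ in $V$ by convexity, and a Fubini swap shows that the fiber of $(x,y)\mapsto z$ over each $z\in V$ has measure bounded by $\min\bigl((1-t)^{-d}|V\sm U|,\,t^{-d}|U|\bigr)$; the two branches come from treating the substitution with $x$ fixed and with $y$ fixed respectively. Hence the inner double integral in $J$ is at most this minimum times $\int_V|\nb u|^2$. Splitting the $t$-integral at the crossover point $t_*=1/(1+(|V\sm U|/|U|)^{1/d})$ and using the appropriate branch on each side gives a contribution proportional to $|V\sm U|^{1/d}|U|^{(d-1)/d}$; bounding $|U|^{(d-1)/d}\le C\,\mathrm{diam}(V)^{d-1}$ via $|V|\le\omega_d\,\mathrm{diam}(V)^d$ completes the proof. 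The main obstacle is exactly the production of the sharp exponent $1/d$: either branch of the minimum used in isolation gives a divergent $t$-integral at one of the endpoints of $[0,1]$, and it is the delicate balancing at $t_*$, together with uniform tracking of the constant in $U$ and $V$, that turns the naive $|V\sm U|$ into the sharper $|V\sm U|^{1/d}$.
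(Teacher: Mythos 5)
The paper does not prove this lemma: it is quoted verbatim from Marchenko--Khruslov, \cite[Lemma~4.9, Remark~4.2]{MKh}, and used as a black box in Appendix~\ref{app}. Your proposal therefore does not parallel anything in the text; it is a self-contained proof where the paper merely cites. That said, the argument you sketch is sound. The core mechanism --- parametrising $u(x)-u(y)$ along the segment $[y,x]\subset V$, squaring, averaging in $y$, and then handling the triple integral $J$ via the change of variables $z=(1-t)y+tx$, with the scaling factor $(1-t)^{-d}$ when $y$ is integrated out and $t^{-d}$ when $x$ is, followed by a split of the $t$-integral at the crossover of the two bounds --- is correct, and it does produce the exponent $|V\setminus U|^{1/d}$ together with $\mathrm{diam}(V)^{d-1}\gtrsim|V|^{(d-1)/d}$, hence the stated $\mathrm{diam}(V)^{d+1}$ after multiplying by the $\mathrm{diam}(V)^2$ from Cauchy--Schwarz. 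Two remarks. First, your claim that averaging $y$ over $U$ rather than $V$ is \emph{critical} for the exponent is not right: with $y\in V$ one gets, by the identical change-of-variable argument, the bound $\min\bigl((1-t)^{-d}|V\setminus U|,\,t^{-d}|V|\bigr)\int_V|\nabla u|^2$, and the same split at the crossover $t_*=|V|^{1/d}/(|V|^{1/d}+|V\setminus U|^{1/d})$ again yields $C(d)\,|V\setminus U|^{1/d}|V|^{(d-1)/d}\int_V|\nabla u|^2$. Averaging over $V$ is in fact cleaner: it removes the need for the case distinction $|V\setminus U|\gtrless|V|/2$ and recovers the exact coefficient $2|V\setminus U|/|V|$ on the first term, whereas your route only gives that term up to an extra universal factor, so the statement as written is not literally reproduced. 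Second, the opening pointwise identity $u(x)=u(y)+\int_0^1\nabla u((1-t)y+tx)\cdot(x-y)\,\dd t$ requires $u$ to be $C^1$; for $u\in H^1(V)$ you should note that it suffices to prove the inequality for $u\in C^\infty(\overline V)$ and conclude by density, which is available since a bounded convex domain is Lipschitz.
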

Now, we are ready to formulate and prove the key result of this appendix.
\begin{prop}\label{prop:semicont}
	Let $\Omg\subset\dR^d$ with $d =2,3$ be a bounded convex domain. Let $\{\Omg_n\}_{n\in\dN}$ be a sequence of bounded convex domains in $\dR^d$ such that $\Omg_n\supset\Omg$ for all $n\in\dN$ and that $|\Omg_n\sm\Omg|\arr 0$. Then, for all $b > 0$ and any $k\in\dN$
	\[
		\liminf_{n\arr\infty} \mu_k^b(\Omg_n)\ge \mu_k^b(\Omg).
	\] 
\end{prop}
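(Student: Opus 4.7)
The plan is to apply the min-max principle to $\sfH_{b,\rm N}^\Omg$ with a trial subspace of $H^1(\Omg)$ consisting of restrictions to $\Omg$ of the first $k$ magnetic Neumann eigenfunctions on $\Omg_n$. We may assume that $L := \liminf_{n\arr\infty}\mu_k^b(\Omg_n)$ is finite (otherwise there is nothing to prove) and pass to a subsequence (still indexed by $n$) along which $\mu_k^b(\Omg_n)\arr L$. Fix an $L^2(\Omg_n)$-orthonormal family $u_1^{(n)},\dots,u_k^{(n)}\in H^1(\Omg_n)$ of eigenfunctions of $\sfH_{b,\rm N}^{\Omg_n}$ corresponding to $\mu_1^b(\Omg_n),\dots,\mu_k^b(\Omg_n)$, and set $v_i^{(n)} := u_i^{(n)}|_\Omg\in H^1(\Omg)$.

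The first step is to obtain uniform (in $n$) bounds enabling the use of Lemma~\ref{lem:MKh}. Since each $\Omg_n$ is convex and contains the fixed convex set $\Omg$, the existence of a point $y_n\in\Omg_n$ at arbitrarily large distance from $\Omg$ would force the volume of the convex hull of $\Omg$ with $y_n$, and hence of $\Omg_n$, to be arbitrarily large (this volume grows at least linearly in $\mathrm{dist}(y_n,\Omg)$ for fixed $\Omg$), contradicting $|\Omg_n\sm\Omg|\arr 0$. Hence the diameters of the $\Omg_n$ are uniformly bounded, and there exists $C_\bA>0$ with $|\bA|\le C_\bA$ on $\Omg_n$ for all large $n$. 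The triangle inequality then yields
\[
\|\nb u_i^{(n)}\|_{L^2(\Omg_n;\dC^d)}\le\|(\nb-\ii\bA)u_i^{(n)}\|_{L^2(\Omg_n;\dC^d)}+C_\bA\|u_i^{(n)}\|_{L^2(\Omg_n)}\le\sqrt{\mu_k^b(\Omg_n)}+C_\bA,
\]
which is uniformly bounded in $n$. Applying Lemma~\ref{lem:MKh} with $U=\Omg$ and $V=\Omg_n$ then gives $\|u_i^{(n)}\|^2_{L^2(\Omg_n\sm\Omg)}\arr 0$ as $n\arr\infty$ for each $i\in\{1,\dots,k\}$.

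Consequently $\|v_i^{(n)}\|^2_{L^2(\Omg)}\arr 1$, and by the Cauchy--Schwarz inequality $(v_i^{(n)},v_j^{(n)})_{L^2(\Omg)}\arr\delta_{ij}$. For $n$ large enough the subspace $\cM_n := {\rm span}\,\{v_1^{(n)},\dots,v_k^{(n)}\}\subset H^1(\Omg)$ thus has dimension $k$, and every $v=\sum_{i=1}^k c_i v_i^{(n)}\in\cM_n$ satisfies $\|v\|^2_{L^2(\Omg)}\ge (1-\eta_n)\sum_{i=1}^k|c_i|^2$ with some $\eta_n\arr 0$. Setting $U_n:=\sum_{i=1}^kc_iu_i^{(n)}\in H^1(\Omg_n)$ and using the orthogonality of the $u_i^{(n)}$ with respect to the quadratic form $\frh_{b,\rm N}^{\Omg_n}$, one obtains
\[
\|(\nb-\ii\bA)v\|^2_{L^2(\Omg;\dC^d)}\le\|(\nb-\ii\bA)U_n\|^2_{L^2(\Omg_n;\dC^d)}=\sum_{i=1}^k\mu_i^b(\Omg_n)|c_i|^2\le\mu_k^b(\Omg_n)\sum_{i=1}^k|c_i|^2.
\]
The min-max principle~\eqref{eq:minmaxN} applied to $\cM_n$ therefore gives $\mu_k^b(\Omg)\le \mu_k^b(\Omg_n)/(1-\eta_n)$, and letting $n\arr\infty$ yields $\mu_k^b(\Omg)\le L$, as required.

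The only non-routine point, and the reason convexity of the $\Omg_n$ enters beyond its role in Lemma~\ref{lem:MKh}, is the uniform $L^\infty$ bound on $\bA$ over $\Omg_n$: without it, $\|\nb u_i^{(n)}\|_{L^2(\Omg_n)}$ cannot be dominated by the magnetic quadratic form and the tail estimate from Lemma~\ref{lem:MKh} cannot be triggered. The remainder is careful $o(1)$-bookkeeping of the contributions coming from the vanishing tail $\Omg_n\sm\Omg$.
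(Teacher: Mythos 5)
Your proof is correct and follows essentially the same route as the paper: restrict the first $k$ Neumann eigenfunctions of $\Omg_n$ to $\Omg$, use Lemma~\ref{lem:MKh} to show that their $L^2$ mass on $\Omg_n\setminus\Omg$ vanishes, and feed the resulting almost-orthonormal family into the min-max principle on $\Omg$. The only (harmless) difference is that you obtain the gradient bound needed for Lemma~\ref{lem:MKh} from a uniform $L^\infty$ bound on $\bA$ plus the triangle inequality (justifying the uniform diameter bound along the way), whereas the paper applies the lemma to $|u_{l,n}|$ via the diamagnetic inequality, which bypasses the explicit bound on the potential.
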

\begin{proof}
	For the convenience of the reader, we split the argument into two steps. In the first step, we obtain estimates of the eigenfunctions of $\sfH_{b,{\rm N}}^{\Omg_n}$ inside $\Omg_n\sm\Omg$. Then, in the second step we apply the min-max principle to $\sfH_{b,{\rm N}}^\Omg$ using
	as trial functions the restrictions to $\Omg$ of the eigenfunctions of $\sfH_{b,{\rm N}}^{\Omg_n}$. This strategy is reminiscent of the one used by Levine and Weinberger in~\cite[Lemma 4.6]{LW86}. 	
	
	\smallskip
	
	\noindent {\it Step 1: estimates of the eigenfunctions in $\Omg_n\sm\Omg$.}
	Let us fix $k\in\dN$. Let $u_{1,n},u_{2,n},\dots, u_{k,n}\in H^1(\Omg_n)$ be the orthonormal eigenfunctions of $\sfH_{b,\rm N}^{\Omg_n}$ corresponding to the
	lowest $k$ eigenvalues: $\mu_1^b(\Omg_n),\mu_2^b(\Omg_n),\dots, \mu_k^b(\Omg_n)$.
	Note that the geometric assumptions on $\Omg_n$ and $\Omg$ yield 
	\[
		\rho:=\sup_{n\in\dN} {\rm diam}\,(\Omg_n) < \infty
		\qquad\aa :=\sup_{n\in\dN}|\Omg_n\sm\Omg| <\infty.
	\]
	Combining Lemma~\ref{lem:MKh} and the diamagnetic inequality~\cite[Theorem 7.21]{LL} we get for any $l\in\{1,2,\dots,k\}$
	\[
	\begin{aligned}
		\int_{\Omg_n\sm\Omg}|u_{l,n}|^2\dd x & \le \frac{2|\Omg_n\sm\Omg|}{|\Omg_n|}\int_{\Omg_n}|u_{l,n}|^2\dd x+ \frac{C(d)({\rm diam}\,(\Omg_n))^{d+1}|\Omg_n\sm\Omg|^{1/d}}{|\Omg_n|}
		\int_{\Omg_n}|\nb |u_{l,n}||^2\dd x \\
		&\le
		\frac{2|\Omg_n\sm\Omg|}{|\Omg|}
		+\frac{C(d)\rho^{d+1}|\Omg_n\sm\Omg|^{1/d}}{|\Omg|}
		\int_{\Omg_n}|(\nb - \ii{\bf A})u_{l,n}|^2 \dd x\\
		&\le
		\frac{1}{|\Omg|}
		\left(2|\Omg_n\sm\Omg|
		+C(d)\rho^{d+1}|\Omg_n\sm\Omg|^{1/d}
		\mu_l^b(\Omg_n)\right).
	\end{aligned}	
	\]
	Applying the min-max principles~\eqref{eq:minmax} to
	the operators $\sfH_{b,\rm N}^{\Omg_n}$, $\sfH_{b,\rm D}^{\Omg_n}$, and $\sfH_{b,\rm D}^\Omg$ we get for $l\in\{1,2,\dots,k\}$ the following chain of inequalities
	\[
		\mu_l^b(\Omg_n) \le \mu_k^b(\Omg_n)\le \lm_k^b(\Omg_n)\le \lm_k^b(\Omg),
	\]
	where for the last inequality we employed $\Omg\subset\Omg_n$.	Thus, there exists a constant $c > 0$, which depends only on $\Omg$, $\aa$, $\rho$, $b$, and $k$ such that
	\[
		\int_{\Omg_n\sm\Omg}|u_{l,n}|^2\dd x
		\le c|\Omg_n\sm\Omg|^{1/d} 
	\]
	for all $l \in\{1,2,\dots,k\}$.
	In particular, for any $\eps > 0$ there exists
	a sufficiently large $N\in\dN$ such that for
	any $l\in\{1,2,\dots,k\}$ and all $n\ge N$
	\begin{equation}\label{eq:uln}
		\int_{\Omg_n\sm\Omg}|u_{l,n}|^2\dd x\le \eps.
	\end{equation}
	\noindent{\it Step 2: application of the min-max principle.}	
	Consider the subspace 
	\[
	\cL_{k,n} := {\rm span}\,\{u_{1,n}|_\Omg, u_{2,n}|_\Omg,\dots, u_{k,n}|_\Omg\}\subset H^1(\Omg).
	\]
	For any $v = \sum_{l=1}^k c_l u_{l,n}|_{\Omg} \in\cL_{k,n}$
	with arbitrary $c_l\in\dC$ ($l=1,2,\dots,k$) we get
	\begin{equation}\label{eq:vest}
	\begin{aligned}
		\int_\Omg|v|^2\dd x &=
		\sum_{i=1}^k\sum_{j=1}^k c_i\ov{c_j} \int_\Omg u_{i,n}\ov{u_{j,n}}\dd x \\
		&\ge
		\sum_{l=1}^k |c_l|^2\int_\Omg|u_{l,n}|^2\dd x-
		\sum_{\begin{smallmatrix}i,j=1\\
				i\ne j\end{smallmatrix}}^k|c_i||c_j|
			\left|\int_{\Omg_n\sm\Omg} u_{i,n}\ov{u_{j,n}}\dd x\right|\ge \sum_{l=1}^k|c_l|^2(1-k\eps).
	\end{aligned}	
	\end{equation}
	Hence, for all sufficiently small $\eps > 0$ we get that $v\ne 0$ provided that not all of the coefficients $\{c_l\}_{l=1}^k$ are equal to zero. Hence, it holds that $\dim\cL_{k,n} = k$ for all sufficiently large $n\in\dN$. 
			
	By~\eqref{eq:minmaxN} we conclude that
	there exist complex coefficients $\{c_l'\}_{l=1}^k$ such that for $v = \sum_{l=1}^k c_l' u_{l,n}|_\Omg\in H^1(\Omg)$
	one has
	\[
		\mu_k^b(\Omg) \le 
		\frac{\displaystyle\int_{\Omg}|(\nb -\ii {\bf A}) v|^2\dd x}{\displaystyle \int_\Omg |v|^2 \dd x}.
	\]
	Let us define the function $u = \sum_{l=1}^k c_l' u_{l,n}\in H^1(\Omg_n)$.
	Choosing $N\in\dN$ such that the estimate in~\eqref{eq:uln} holds with $\eps < \frac{1}{k}$ for all $n\ge N$ and applying the lower bound in~\eqref{eq:vest} we end up with 
	\[  
		\mu_k^b(\Omg)\le \frac{1}{1-k\eps}
		\frac{\displaystyle\int_{\Omg_n}|(\nb -\ii {\bf A}) u|^2\dd x}{\displaystyle \int_{\Omg_n} |u|^2 \dd x}
		\le \frac{\mu_k^b(\Omg_n)}{1-k\eps}.
	\]
	From this inequality we get
	\[
		\liminf_{n\arr\infty}\mu_k^b(\Omg_n) \ge \mu_k^b(\Omg)(1-k\eps).
	\]
	The claim follows from the fact that we can take $\eps > 0$ arbitrarily small.
\end{proof}
\end{appendix}
\newcommand{\etalchar}[1]{$^{#1}$}

\end{document}